\documentclass[12pt]{article}
\usepackage{amssymb}

\usepackage{amsmath}
\usepackage{amsthm}
\usepackage{tikz}
\usepackage{amscd}
\usepackage{latexsym}
\usepackage{mathrsfs}
\usepackage{color}
\usepackage{graphicx}
\usepackage[noadjust]{cite}
\usepackage[all,pdf]{xy}
\usepackage{enumerate}
\usepackage{doi}
\usepackage{enumitem}


\setlength{\oddsidemargin}{-0.1cm}
\setlength{\evensidemargin}{-0.1cm}
\setlength{\topmargin}{-1.0cm}
\setlength{\parindent}{12pt}
\setlength{\parskip}{3ptplus1ptminus2pt}
\setlength{\baselineskip}{20pt plus2pt minus1pt}
\setlength{\textheight}{23true cm}
\setlength{\textwidth}{16true cm}

\newtheorem{thm}{Theorem}[section]

\newtheorem{prop}[thm]{Proposition}
\newtheorem{lem}[thm]{Lemma}
\newtheorem{cor}[thm]{Corollary}

\numberwithin{equation}{section}

\begin{document}

\title{{\bf Nilpotent groups, solvable groups and factorizable inverse monoids}
\footnote{This paper is supported by National Natural Science Foundation of China (11971383, 12201495, 12371024).}}
\author{{\bf Dong-lin Lei} $^a$, {\bf Jin-xing Zhao} $^b$, {\bf Xian-zhong Zhao} $^c$ 
  \footnote{Corresponding author. E-mail: zhjxing@imu.edu.cn} \\
   {\small $^a$ School of Mathematics, Northwest University}\\
   {\small Xi'an, Shaanxi, 710127, P. R. China}\\
   {\small $^b$ School of Mathematical Sciences, Inner Mongolia University}\\
   {\small Hohhot, Inner Mongolia, 010021, P. R. China}\\
    {\small $^c$ School of Mathematics and Data Science},\\
   {\small Shaanxi University of Science and Technology} \\
   {\small Xian, Shaanxi, 710021, P.R. China}\\}
\date{}
\maketitle
\vskip -4pt
\baselineskip 16pt

{\small
\noindent\textbf{ABSTRACT}
In this paper subcentral (resp., central) idempotent series and composition subcentral (resp., central) idempotent series in an inverse semigroup are introduced and investigated. It is shown that if $S=EG$ is a factorizable inverse  monoids with semilattice $E$ of idempotents and the group $G$ of units such that the natural connection $\theta$ is a dual isomorphism from $E$ to a sublattice of $L(G)$, then any two composition subcentral (resp., central) idempotent series in $S$ are isomorphic. It may be considered as an appropriate analogue in semigroup theory of Jordan-H\"{o}lder Theorem in group theory. Based on this,
$G$-nilpotent and $G$-solvable inverse monoids are also introduced and studies. Some characterizations of the coset monoid of nilpotent groups and solvable groups are given.  This extends the main result in Semigroup Forum 20: 255-267, 1980 and also provides another effective approach for the study of nilpotent and solvable groups. Finally, some open problems related to nilpotent and solvable groups are translated to semigroup theory, which may be helpful for us to solve these open problems.

\vskip 6pt \noindent
\textbf{Keywords:}
factorizable inverse monoids; subcentral idempotents; Jordan-H\"{o}lder Theorem; nilpotent groups;  solvable groups.
\vskip 6pt \noindent
\textbf{2020 Mathematics Subject Classifications:}  20M18; 20E34; }

\section{Introduction}

Throughout this paper, we shall confine ourselves to the inverse monoids with zero.
Recall that an inverse monoid $S$ is said to be \emph{factorizable} if every element lies beneath an
element of the group of units with respect to the natural partial order. That is to say, $S=GE$ (or equivalently, $S=EG$), where $E$ is the semilattice of idempotents of $S$ and $G$ is the group of units of $S$ (see \cite{lawson}). Factorizable inverse monoids and their generalizations play an important role in the theory of inverse semigroups and have been studied extensively in the literature, see for example \cite{chen, easdown, james, james4, Fitz, lawson1, mcali2}.

Let $G$ be a group and ${\cal K}(G)$ denote the set of all right cosets of all subgroups of $G$. Then ${\cal K}(G)$ equipped with the binary operation
$$Ha* Kb = \langle H, K^{a^{-1}}\rangle ab$$
becomes an inverse monoid with zero (see \cite{schei} or \cite{mcali1}), which is called the coset monoid of group $G$. Every coset monoid is a factorizable inverse monoid. Coset monoids have attracted the attention of many scholars such as Schein, McAlister, East and Ara\'{u}jo et al.. There is a series of papers in the literature devote to study them (see \cite{lawson, schei, schei2, mcali1, james2, james3, shum, arau, petrich}).
In particular it was shown in \cite{mcali1} that an inverse semigroup $S$ is isomorphic to the coset monoid ${\cal K}(G)$ for some group $G$ if and only if $S$ is factorizable and the natural connection $\theta$ of the semilattice $E$ idempotents of $S$ into the lattice $L(G)$ of subgroups of $G$, defined by $e\theta = \{h \in H: eh = e\}$ for each idempotent $e \in S$, is a dual isomorphism.
Along this line of investigation, we will go on to the study of the factorizable inverse monoids and coset monoids in this paper.

Firstly, we shall introduce and investigate subcentral (resp., central) idempotent series and composition subcentral (resp., central) idempotent series in an inverse semigroup. It is shown that any two composition subcentral (resp., central) idempotent series of $S$ are isomorphic for factorizable inverse  monoids $S=EG$ such that the natural connection $\theta$ is a dual isomorphism from $E$ to a sublattice of $L(G)$, where $E$ and $G$ denote the semilattice of idempotents of $S$ and the group of units of $S$ respectively. This provides an appropriate analogue in semigroup theory of Jordan-H\"{o}lder Theorem in group theory in section 2. Also, $G$-nilpotent and $G$-solvable inverse monoids are introduced in section 3. Based on this, some characterizations of the coset monoid of nilpotent and solvable groups are given. In section 4 some open problems related to nilpotent and solvable groups are translated to semigroup theory, which may be helpful for us to solve these open problems.

For the notation and terminology used in this paper, the reader is referred to Howie \cite{howie} and Lawson \cite{lawson} for backgrounds on semigroups and inverse semigroups, to Robinson \cite{robinson} for knowledge of group theory.

\section{The analogue of Jordan-H\"{o}lder Theorem}

It is well-known that Jordan-H\"{o}lder Theorem plays an fundamental role in group theory. In this section we shall provide an appropriate analogue in inverse semigroup theory of Jordan-H\"{o}lder Theorem in group theory.
For this, the following notation and terminology are needed.
Recall that an element in an inverse semigroup $S$ is called central if $cs=sc$ for every $s$ in $S$.
The set $Z(S)$ of all central elements, called the centre of $S$, which forms an inverse subsemigroup of $S$.
Also, $eS$ forms an inverse submonoid  with identity $e$ for every central idempotent $e$ in $S$.
It is well known that $H_e$ (the ${\cal H}$-class of $S$ containing $e$) is just the group of units of $eS$.

Let $S$ be an inverse semigroup. It is well known that $S$ is a partially ordered inverse semigroup under the natural order (write it as $\leq$).
As usual, $a^{\upharpoonright}$ denotes the filter generated by an element $a$ in $S$, which is of course an inverse subsemigroup of $S$ when $a$ is idempotent in $S$.
For idempotents $e$ and $f$ in $S$ with $f$ central in $e^\upharpoonright$, $fe^\upharpoonright$ forms an inverse submonoid of $e^\upharpoonright$, denoted by $F_{e,f}$. Also, we shall write $UF_{e,f}$ to denote the group of units of $F_{e,f}$.
A chain
$$0=e_0\leq e_1\leq \cdots\leq e_{n-1} \leq e_n = e  \eqno(*)$$
of idempotents in $S$ is said to be a \emph{subcentral (resp., central) idempotents series between 0 and $e$ of length $n$} if $e_{i+1}$ is central in $e_{i}^{\upharpoonright}$ (resp., $S$) for every $0\leq i\leq n-1$. The length of the shortest subcentral idempotent series between 0 and $e$ is called the \emph{defect} of $e$.
Each $e_i$ in the subcentral idempotents series $(*)$ is said to be \emph{subcentral} in $S$.
Also, a subcentral (resp., central) idempotents series between 0 and 1 is said to be a \emph{subcentral (resp., central) idempotents series of $S$}. It is easy to see that an idempotent $e$ is central, if and only if, $0\leq e\leq 1$ is a subcentral idempotent series of $S$, if and only if, $e$ is subcentral with defect 0 or 1. Thus every central idempotent is subcentral in $S$. Further, for idempotents $e,f$ in $S$ with $f$ central in $e^\upharpoonright$, if $e$ is subcentral and $(*)$ is a subcentral idempotent series between 0 and $e$, then
$$0=e_0\leq e_1\leq \cdots\leq e_{n-1} \leq e_n = e\leq f$$
becomes a subcentral idempotent series between 0 and $f$ and so $f$ is subcentral in $S$.
This shows that a central idempotent in the filter generated by a subcentral idempotent in $S$ is subcentral in $S$.

It is clear that the subcentral idempotent series $(*)$ between 0 and $e$ determines a descending chain
$$S=e_0^{\upharpoonright}\supseteq e_1^{\upharpoonright}\supseteq \cdots \supseteq e_{n-1}^{\upharpoonright}\supseteq e_n^{\upharpoonright}$$
of inverse subsemigroups of $S$.
Further, the mapping $\varphi_{e_i,e_{i+1}}: e_i^{\upharpoonright}\rightarrow e_{i+1}^{\upharpoonright}$ defined by $(x)\varphi_{e_i,e_{i+1}}=e_{i+1}x$ is a semigroup homomorphism.
Each homomorphic image $(e_i^\upharpoonright)\varphi_{e_i,e_{i+1}}$ ($0\leq i\leq n-1$), which is equal to $F_{e_i,e_{i+1}}$ (see Lemma \ref{phie}), is called the \emph{factors} of the series $(*)$.
Two subcentral (resp., central) idempotent series of an inverse semigroup are said to be \emph{isomorphic} if they have the same length and there is a one-to-one correspondence between their factors such that corresponding factors are isomorphic.
A subcentral (resp., central) idempotent series of an inverse semigroup is called a \emph{subcentral refinement} (resp., \emph{central refinement}) of another subcentral (resp., central) idempotent series of the inverse semigroup if the terms of the second series all occurs as terms of the first.
A subcentral (resp., central) idempotent series of an inverse semigroup is said to be a \emph{composition} subcentral (resp., central) idempotent series if it has no repeated terms and can not be refined except by repeating terms.

Let $S$ be a factorizable inverse monoid with semilattice of idempotents $E$ and group of units $G$, the mapping $\theta$ from $E$ into the lattice $L(G)$ of subgroups of $G$ defined by
$$e\theta = \{g \in G: eg = e\}=e^\upharpoonright \cap G$$
is anti-isotone, which is called the \emph{natural connection} between $E$ and $L(G)$ (see \cite{mcali1}).
We have

\begin{lem}\label{fgeq}
Let $S$ be a factorizable inverse monoid with semilattice of idempotents $E$ and group of units $G$. Then the following are true:
\begin{itemize}
\item[${ (i)}$] $e^\upharpoonright = [e,1]\cdot e\theta $;

\item[${(ii)}$] $eS = [0,e]\cdot G$;

\item[${(iii)}$] $F_{e,f}=f e^\upharpoonright =[e,f]\cdot e\theta$ and $UF_{e,f}=f\cdot e\theta$,

\end{itemize}

\noindent where  $e,\,f$ are two idempotents in $S$  such that $f$ is central in $e^\upharpoonright$, $[e,f]$  denote the interval $\{e' \mid e \leq e' \leq f\}$ in the semilattice $E$.
\end{lem}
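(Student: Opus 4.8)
The plan is to run the whole lemma off two facts: factorizability --- every $a\in S$ satisfies $a\le g$ for some $g\in G$, and hence $a=aa^{-1}g$ --- together with the description of the natural order on idempotents, namely $e\le s$ iff $es=se=e$ for $e$ idempotent. I will also use freely that the idempotents of $S$ commute, that $e^\upharpoonright$ is a submonoid of $S$ with identity $1$, and that $e\theta=e^\upharpoonright\cap G$ is its group of units (it contains $1$ and is closed under products and inverses since $e^\upharpoonright$ is).

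For (i), the inclusion $[e,1]\cdot e\theta\subseteq e^\upharpoonright$ is immediate, since $e\le e'$ and $g\in e\theta$ give $e'g\ge eg=e$. For the reverse inclusion I would take $b\in e^\upharpoonright$, choose $g\in G$ with $b\le g$ by factorizability, and write $b=bb^{-1}g$; then $e':=bb^{-1}$ is an idempotent with $e'\ge ee^{-1}=e$ by compatibility of $\le$ with products and inverses, while $e\le b\le g$ forces $e\le g$, so $g\in e\theta$ and $b=e'g\in[e,1]\cdot e\theta$. Part (ii) is the analogous computation and needs no centrality hypothesis: for $e'\le e$ and $g\in G$ one has $e'g=(ee')g=e(e'g)\in eS$, while conversely, for $s\in S$, choosing $g\in G$ with $s\le g$ yields $es=e(ss^{-1})g=e'g$ with $e':=e\cdot ss^{-1}$ an idempotent below $e$.

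For (iii) I would read off the description of $F_{e,f}=fe^\upharpoonright$ by pushing the factorization of (i) through the homomorphism $\varphi_{e,f}\colon x\mapsto fx$, so that $F_{e,f}=f\cdot\bigl([e,1]\cdot e\theta\bigr)=\bigl(f\cdot[e,1]\bigr)\cdot e\theta$, and then checking $f\cdot[e,1]=[e,f]$: the inclusion $f\cdot[e,1]\subseteq[e,f]$ holds because $fe'$ is an idempotent with $e\le fe'\le f$, and the reverse inclusion because every $e''\in[e,f]$ equals $fe''$. For the group of units, $\varphi_{e,f}$ is unital ($1\mapsto f$), hence carries the group of units $e\theta$ of $e^\upharpoonright$ into $UF_{e,f}$, and $f\cdot e\theta$ is genuinely a subgroup with identity $f$ since $f$ centralizes $e\theta$ inside $e^\upharpoonright$, so $f\cdot e\theta\subseteq UF_{e,f}$. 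For the reverse inclusion I would write a unit $u$ of $F_{e,f}$ as $u=e''g$ with $e''\in[e,f]$ and $g\in e\theta$ (using the description of $F_{e,f}$ just proved); a short computation gives $uu^{-1}=e''$ in $S$, and since $u$ is a unit of the monoid $F_{e,f}$ with identity $f$ we must have $uu^{-1}=f$, forcing $e''=f$ and $u=fg\in f\cdot e\theta$.

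The one step I would treat carefully --- essentially the only non-formal point --- is that last one: I need the inverse of a unit $u$ of $F_{e,f}$ taken in $S$ to coincide with its inverse taken inside $F_{e,f}$, so that $uu^{-1}$ really is the identity $f$ of $F_{e,f}$ and not merely some idempotent below it. This holds because inverses are unique in an inverse semigroup and $F_{e,f}$ is an inverse submonoid of $S$. Everything else reduces to factorizability and the commutativity of idempotents.
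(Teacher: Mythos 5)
Your proof is correct and follows essentially the same route as the paper: both arguments rest on writing each element as an idempotent times a unit via factorizability, reading off the order conditions, and characterizing the units of $F_{e,f}$ by $uu^{-1}=u^{-1}u=f$ with the inverse taken in $S$. Your small variations (using the canonical form $b=bb^{-1}g$ where the paper cites Chen--Hsieh to get $e'\ge e$, and obtaining the description of $F_{e,f}$ by pushing (i) through $\varphi_{e,f}$ instead of recomputing) do not change the substance, and you correctly flag and justify the one delicate point --- that $f$ is the identity of $F_{e,f}$, which uses the centrality of $f$ in $e^\upharpoonright$, and that inverses in $F_{e,f}$ agree with those in $S$ --- which the paper also verifies explicitly.
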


\begin{proof}
(i) It is clear that $[e,1]\cdot e\theta=\{e'g \mid e\leq e'\in E, e\leq g\in G\}$.

Suppose that $s=e'g$, where $ e\leq e'\in E, e\leq g\in G$.
Then $s\in e^\upharpoonright$.
Conversely, if $s=e'g\in e^\upharpoonright$, then  $g\geq ee'g=es=e$, since $ee'\in E$ is idempotent. Also, we can see from \cite[Lemma 2.1(iv)]{chen} that $ee'=e$, i.e., $e'\geq e$. This shows that $e^\upharpoonright = [e,1]\cdot e\theta$, as required.

(ii) It is clear that $[0,e]\cdot G=\{e'g \mid e'\leq e, g\in G \}$.

Suppose that $s\in eS$. Then $s=ee'g$ for some $e'\in E$ and $g\in G$,  since $S$ is factorizable.
Thus $s\in \{e'g \mid  e'\leq e, g\in G \}$. Conversely, suppose that
$$t=f'g \in \{e'g \mid e'\leq e, g\in G  \},$$
where $f'\in E$, $f'\leq e$ and $g\in G$. Then we have $f'=ef'$ and so $t=f'g=ef'g\in eS$. This shows that $eS=[0,e]\cdot G$, as required.

(iii) It is clear that $[e,f]\cdot e\theta=\{e'g \mid e\leq e'\leq f, e\leq g\in G\}$.

We can see from (i) and (ii) that $F_{e,f}=\{e'g \mid e\leq e'\leq f, e\leq g\in G\}=[e,f]\cdot e\theta$. To show the remaining half, i.e., $UF_{e,f}=f\cdot e\theta$, we first prove that $f$ is the identity of $F_{e,f}$. For every given $s\in F_{e,f}$, suppose that $s=e'g$,
where $e\leq e'\leq f$ and $e\leq g\in G$, since $F_{e,f}=\{e'g \mid e\leq e'\leq f, e\leq g\in G\}$. Notice that $f$ is central in $e^\upharpoonright$, we have that $gfg^{-1}=gg^{-1}f=f$. This implies that $sf=e'gf=e'gfg^{-1}g=e'fg=e'g=s$ and $fs=fe'g=e'g=s$ and so $f$ is the identity of $F_{e,f}$. On the other hand, we have
$$\begin{array}{llllll}
s = e'g\in UF_{e,f}&\iff  ss^{-1}=s^{-1}s=f&\iff (e'g)(e'g)^{-1}=(e'g)^{-1}(e'g)=f  \\
&\iff  e'=g^{-1}e'g=f&\iff s=fg.
\end{array}$$
This shows that $UF_{e,f}=\{fg \mid e\leq g\in G \}=f\cdot e\theta$. This completes our proof.
\end{proof}

The above lemma tells us that in a factorizable inverse monoid $S$, the filter $e^\upharpoonright$ generated by an idempotent $e$ is also a factorizable inverse monoid with the semilattice $[e,1]$ of idempotents and the group $e\theta$ of units.
It is also  easy to verify that if $f$ is a central idempotent in $e^\upharpoonright$, then $fe^\upharpoonright$ forms an inverse subsemigroup of $e^\upharpoonright$ with  the semilattice $[e,f]$ of idempotents and the group $f\cdot e\theta$ of units.

In the remainder of paper we shall write $H\leq_g G$ (resp., $H\unlhd G$) to signify always that $H$ is a subgroup (resp., normal subgroup) of a group $G$.
In the following we shall give some preliminary results which will be useful in the sequel.

\begin{lem}\label{thecon}
Let $S$ be an inverse monoid with semilattice of idempotents $E$ and group of units $G$. Then
\begin{itemize}
\item[${ (i)}$] $(g^{-1}eg)\theta=g^{-1}(e\theta)g$ for every $e\in E$ and $g\in G$;

\item[${(ii)}$] An idempotent $e$ is central in $S$ if and only if $e\theta\unlhd G$, when $S$ is factorizable and $\theta$ is injective;

\item[${(iii)}$] The idempotent $e\vee f$ is central in $f^\upharpoonright$ (resp., central in $S$), when $S$ is factorizable and $e, \, f\in E$ such that $e$ is central in $S$ (resp., $e, \, f$ are central in $S$) and $e\vee f$ exists;

\item[${(iv)}$] $e_1\vee f$ is central in $(e_2\vee f)^\upharpoonright$, when $S$ is factorizable and $e_1, \, e_2, \, f\in E$ such that $e_1$ is central in $e_2^\upharpoonright$ and $e_1\vee f$, $e_2\vee f$ exist;

\item[${(v)}$] $e_1f$ is central in $(e_2f)^\upharpoonright$, when $S$ is factorizable, $\theta$ is a dual isomorphism from $E$ to a sublattice of $L(G)$ and $e_1, \, e_2, \, f\in E$ such that $e_1$ is central in $e_2^\upharpoonright$ and $f$ is central in $S$.
\end{itemize}
\end{lem}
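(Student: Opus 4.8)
The plan is to push the whole statement through the natural connection $\theta$ and reduce it to a one-line group-theoretic fact. First I would record the trivial order relation: since $e_1$ is central in $e_2^\upharpoonright$ it belongs to $e_2^\upharpoonright$, so $e_1\geq e_2$ in $E$ and hence $e_1f\geq e_2f$; thus $e_1f$ is an idempotent of the interval $[e_2f,1]$, which by Lemma \ref{fgeq}(i) is exactly the semilattice of idempotents of the factorizable inverse monoid $(e_2f)^\upharpoonright$. A short computation shows that the natural connection of $(e_2f)^\upharpoonright$ sends each $e'\in[e_2f,1]$ to $(e')^{\upharpoonright}\cap(e_2f)\theta=e'\theta$, the second equality holding because $e'\geq e_2f$ forces $e'\theta\subseteq(e_2f)\theta$; so this connection is just the restriction of $\theta$, and in particular it is injective, $\theta$ being a dual isomorphism.

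Granting this, Lemma \ref{thecon}(ii), applied inside the factorizable monoid $(e_2f)^\upharpoonright$, reduces the claim to $(e_1f)\theta\unlhd(e_2f)\theta$. Now I would use that $\theta$ is a dual isomorphism onto a sublattice of $L(G)$: it turns the semilattice product $e_if=e_i\wedge f$ into the join in $L(G)$, so $(e_if)\theta=\langle e_i\theta,f\theta\rangle$ for $i=1,2$. Writing $A=e_1\theta$, $B=e_2\theta$, $C=f\theta$, the two hypotheses translate, again via Lemma \ref{thecon}(ii) used in $e_2^\upharpoonright$ and in $S$ respectively, into $A\unlhd B$, $A\leq B\leq G$ and $C\unlhd G$; since $C\unlhd G$ we have $\langle A,C\rangle=AC$ and $\langle B,C\rangle=BC$, so what remains is the purely group-theoretic implication
$$A\unlhd B\leq G,\quad C\unlhd G\ \Longrightarrow\ AC\unlhd BC.$$
This follows from the correspondence theorem: the canonical epimorphism $G\to G/C$ restricts to a homomorphism $B\to G/C$ with image $BC/C$, and carries the normal subgroup $A$ of $B$ onto the normal subgroup $AC/C$ of $BC/C$; pulling back along $C\leq AC\leq BC$ gives $AC\unlhd BC$, i.e. $(e_1f)\theta\unlhd(e_2f)\theta$, as needed.

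The delicate points here are only organizational: one must check carefully that $\theta$ genuinely restricts to the natural connection of each filter $e^\upharpoonright$ so that Lemma \ref{thecon}(ii) is legitimately available there, and one must invoke correctly the fact that a dual isomorphism onto a sublattice converts $ef$ into $\langle e\theta,f\theta\rangle$ — it is precisely at this step that the stronger hypothesis on $\theta$ (in contrast with the merely anti-isotone $\theta$ used in part (iv)) is essential. The actual mathematical content is the displayed group lemma above, and I expect nothing harder than that to come up.
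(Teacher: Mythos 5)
Your proposal, as written, proves only part (v) of the lemma; parts (i)--(iv) are never addressed. That is the essential gap: the statement under consideration is the whole of Lemma \ref{thecon}, and your argument in fact \emph{consumes} part (ii) twice as an ingredient --- once inside $e_2^\upharpoonright$ to convert ``$e_1$ is central in $e_2^\upharpoonright$'' into $e_1\theta\unlhd e_2\theta$, and once inside $(e_2f)^\upharpoonright$ to convert the desired conclusion into $(e_1f)\theta\unlhd(e_2f)\theta$ --- while (ii) itself (and (i), which the paper uses to prove (ii), as well as (iii) and (iv)) is left unproven. There is no circularity in this reliance, since (ii) does not depend on (v), but as a proof of the stated lemma the attempt is incomplete. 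Your preparatory observations are themselves fine: $e_1\geq e_2$ hence $e_1f\geq e_2f$; $(e_2f)^\upharpoonright$ is factorizable with semilattice $[e_2f,1]$ and group of units $(e_2f)\theta$ (this is exactly Lemma \ref{fgeq}(i) and the remark following it); and the natural connection of a filter is the restriction of $\theta$, hence injective when $\theta$ is.

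For part (v) itself your route is correct and genuinely different from the paper's. The paper argues directly with elements: it writes $s=hg\in(e_2f)^\upharpoonright$ with $g\in(e_2f)\theta$, uses $f\theta\unlhd G$ and the dual isomorphism to get $(e_2f)\theta=(e_2\theta)(f\theta)$, factors $g=g_1g_2$ with $g_1\in e_2\theta$, $g_2\in f\theta$, and then commutes $e_1f$ past $s$ by hand. You instead translate everything through $\theta$: with $A=e_1\theta$, $B=e_2\theta$, $C=f\theta$ you reduce the claim to $A\unlhd B\leq G$, $C\unlhd G\Rightarrow AC\unlhd BC$, which your correspondence-theorem argument establishes correctly. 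Both proofs pivot on the same identity $(e_if)\theta=(e_i\theta)(f\theta)$; your reduction is cleaner conceptually but presupposes (ii) applied inside filters, whereas the paper's computation needs nothing beyond Lemma \ref{fgeq}. To repair the submission you would need to supply proofs of (i)--(iv) (for (i) a direct computation with the order; for (ii) injectivity of $\theta$ plus factorizability; for (iii) and (iv) the join calculus of \cite[Section 1.4]{lawson}), after which your treatment of (v) stands.
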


\begin{proof}
(i) For every $x\in G$, we have
$$ x\in g^{-1}(e\theta)g \Longleftrightarrow gx g^{-1}\in e\theta \Longleftrightarrow gx g^{-1}\geq e \Longleftrightarrow x\geq g^{-1}eg \Longleftrightarrow x\in(g^{-1}eg)\theta. $$
This shows that $(g^{-1}eg)\theta=g^{-1}(e\theta)g$.

(ii) It is easy to  see that $e\theta\unlhd G$ if $e$ is central in $S$. Conversely, suppose that $e\theta\unlhd G$. Then, we can see from (i) that
$$(g^{-1}eg)\theta=g^{-1}(e\theta)g=e\theta$$
for every $g\in G$. It follows that $g^{-1}eg=e$, i.e., $eg=ge$ for every $g\in G$, since $\theta$ is injective. Now, we have that $es=efg=feg=fge=se$ for every given $s=fg\in S$, where $f\in E$ and $g\in G$. This shows that $e$ is central in $S$, as required.

(iii) \cite[Section 1.4 Lemma 15]{lawson} tells us that $e\vee f$ is an idempotent, we only need to show that $e\vee f$ is central in $f^\upharpoonright$. For $s=hg\in f^\upharpoonright$, where $h\in E$, $g\in G$, we may assume that $g\geq f$ by Lemma \ref{fgeq} and so $gf=f=fg$. Also, $gg^{-1}=g^{-1}g=1\geq e, \, f$. It follows from \cite[Section 1.4 Proposition 18]{lawson} that
$$(e\vee f)s=(e\vee f)hg=h(e\vee f)g=h(eg\vee fg)=h(ge\vee gf)=hg(e\vee f)=s(e\vee f).$$
This shows that $e\vee f$ is central in $f^\upharpoonright$.

The prove of the case that $e, \, f$ are central in $S$ is similar.

(iv) \cite[Section 1.4 Lemma 15]{lawson} tells us that $e_i\vee f$, $i=1,2$, are idempotents. Also, $e_1\geq e_2$  tells us that $e_1 \vee f\geq e_2 \vee f$. For every $s=hg\in(e_2\vee f)^\upharpoonright$, where $h\in E$, $g\in G$, we may assume that $g\geq e_2\vee f\geq e_2, f$ by Lemma \ref{fgeq} and so $gf=f=fg$. Notice that $gg^{-1}=g^{-1}g=1\geq e_1, f$, we can see from \cite[Section 1.4 Proposition 18]{lawson} that
\begin{align*}
(e_1\vee f)s&=(e_1\vee f)hg=h(e_1\vee f)g=h(e_1g\vee fg) \\
&=h(ge_1\vee gf)=hg(e_1\vee f)=s(e_1\vee f).
\end{align*}
This shows that $e_1\vee f$ is central in $(e_2\vee f)^\upharpoonright$.

(v) It is clear that $e_1f\geq e_2f$, since $e_1\geq e_2$. Also, for every $s=hg\in (e_2f)^\upharpoonright$, where $h\in E$ and $g\in G$, we may assume that $h\geq e_2f$ and $g\geq e_2f$ by Lemma \ref{fgeq}, that is, $g\in (e_2f)\theta$. Notice that $f$ is central in $S$ and $\theta$ is a dual isomorphism from $E$ to a sublattice of $L(G)$, it follows that $f\theta\unlhd G$ and
$$ (e_2f)\theta=(e_2\wedge f)\theta=(e_2\theta)\vee(f\theta)=(e_2\theta)(f\theta).$$
This implies that there exist $g_1\in e_2\theta$ and $g_2\in f\theta$ such that $g=g_1g_2$ and so
\begin{align*}
s(e_1f)&=hg_1g_2fe_1= hg_1fe_1=hg_1e_1f=he_1g_1f  \\
&=he_1g_1g_2f=he_1fg_1g_2=e_1fhg_1g_2=(e_1f)s.
\end{align*}
This shows that $e_1f$ is central in $(e_2f)^\upharpoonright$, as required.
\end{proof}

As a consequence of Lemma \ref{thecon} (iii) we have

\begin{cor}\label{subcs}
Let $S$ be a factorizable inverse monoid with semilattice of idempotents $E$.
\begin{itemize}
\item[${(i)}$] If $E$ is also  a upper semilattice (i.e., a lattice) under natural partial order, $e, \, f\in E$ and $e$ is subcentral in $S$, then $e\vee f$ is a subcentral idempotent in $f^\upharpoonright$.
\item[${(ii)}$] If $G$ is the group of units and the natural connection $\theta$ is a dual isomorphism from $E$ to a sublattice of $L(G)$,
then every chain $e_1\leq e_2\leq \ldots\leq e_n$ of subcentral idempotents in $S$ can be refined to a subcentral idempotent series of $S$.
\end{itemize}
\end{cor}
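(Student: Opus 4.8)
The plan is to get part (i) from Lemma~\ref{thecon}(iv) and then deduce part (ii) from part (i). Lemma~\ref{thecon}(iv) is the engine: it pushes the relation ``$e_1$ is central in $e_2^\upharpoonright$'' forward along a join with a fixed idempotent, provided the relevant joins exist; and $E$ is always a meet-semilattice, so the hypotheses of (i) and (ii) are exactly what makes the needed joins available -- in (i) it is assumed that $E$ is a lattice, and in (ii) I would first note that a dual isomorphism $\theta$ onto a sublattice of $L(G)$ forces $E$ to be a lattice (a sublattice of $L(G)$ is a lattice, and an order-reversing bijection transports a lattice structure to one with meets and joins interchanged).

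For (i): since $e$ is subcentral, I would fix a subcentral idempotent series $0=e_0\leq e_1\leq\cdots\leq e_n=e$ of $S$ and join every term with $f$, obtaining $f=f\vee e_0\leq f\vee e_1\leq\cdots\leq f\vee e_n=f\vee e$ inside $f^\upharpoonright$; its bottom is the zero $f$ of $f^\upharpoonright$ and its top is $e\vee f$. By Lemma~\ref{thecon}(iv), with $e_{i+1},e_i,f$ in the roles of $e_1,e_2,f$, each $f\vee e_{i+1}$ is central in $(f\vee e_i)^\upharpoonright$, and since the filter of an idempotent $g\geq f$ is the same whether computed in $f^\upharpoonright$ or in $S$, this is centrality in the right ambient monoid. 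Deleting repeated terms then gives a subcentral idempotent series from the zero of $f^\upharpoonright$ to $e\vee f$, so $e\vee f$ is a subcentral idempotent of $f^\upharpoonright$.

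For (ii): having observed that $E$ is a lattice, I would enlarge the given chain to $0=e_0\leq e_1\leq\cdots\leq e_n\leq e_{n+1}=1$, where $0$ and $1$ are central, hence subcentral, in $S$. For each $i$, apply part (i) with $e_{i+1}$ (subcentral in $S$) in the role of $e$ and $e_i$ in the role of $f$: since $e_i\leq e_{i+1}$ this says $e_{i+1}=e_{i+1}\vee e_i$ is a subcentral idempotent of $e_i^\upharpoonright$, i.e.\ there is a subcentral idempotent series of $e_i^\upharpoonright$ from its zero $e_i$ up to $e_{i+1}$. Again the filters occurring in that series agree with the corresponding filters in $S$, so it is a subcentral idempotent series ``from $e_i$ to $e_{i+1}$'' in the sense needed for concatenation. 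Concatenating these over $i=0,\dots,n$ and deleting repeated terms yields a subcentral idempotent series $0\leq\cdots\leq 1$ of $S$ in which every $e_i$ appears, which is the claimed refinement.

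The only remaining verifications are routine: that $f^\upharpoonright$ is an inverse monoid with zero $f$ (so that the statement of (i) makes sense), that removing a repeated term from a subcentral chain preserves its defining property, and the compatibility of filters in $e_i^\upharpoonright$ with those in $S$. I do not expect any of these to be an obstacle. The one step that is not purely formal is spotting, in (ii), that the $\theta$-hypothesis already supplies the lattice structure on $E$; after that, parts (i) and (ii) amount to iterated use of Lemma~\ref{thecon}(iv) together with elementary bookkeeping with concatenation.
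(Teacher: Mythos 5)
Your proposal is correct and is essentially the paper's own argument: the paper proves (ii) directly by taking a subcentral idempotent series $0=e_{i+1,0}\leq\cdots\leq e_{i+1,n_{i+1}}=e_{i+1}$ for each term and inserting the joins $e_i\vee e_{i+1,j}$ between $e_i$ and $e_{i+1}$, with the centrality of each new term in the filter of its predecessor supplied by Lemma \ref{thecon} (the paper cites part (iii), but part (iv) is what the step actually requires, exactly the lemma you invoke). The only difference is organizational: the paper leaves (i) unproved and does not route (ii) through it, whereas you establish (i) by the same join construction and then apply it with $e=e_{i+1}$, $f=e_i$, which produces the identical refinement.
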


\begin{proof} (i) Omitted.

(ii) For each $e_i$ ($1 \leq i \leq n$), we can get a subcentral idempotent series
$$ 0=e_{i0}\leq e_{i1}\leq\cdots\leq e_{in_i}=e_i, \,\, i=1,2,\ldots,n,$$
between 0 and $e_i$,  since $e_1,e_2,\ldots,e_n$ are all subcentral in $S$. Write $ e_{i,n_i+j}=e_i\vee e_{i+1,j}$,
for any $0\leq i\leq n$ and $1\leq j\leq n_{i+1}$. Then we have
$$e_{in_i}=e_i=e_i\vee 0=e_i\vee e_{i+1,0}\leq e_i\vee e_{i+1,1}=e_{i,n_i+1}, $$
$$e_{i,n_i+j}=e_i\vee e_{i+1,j}\leq e_i\vee e_{i+1,j+1}=e_{i,n_i+j+1}, $$
$$e_{i,n_i + n_{i+1}} = e_i \vee e_{i+1,n_{i+1}} = e_i\vee e_{i+1}=e_{i+1}$$
for every $0\leq i\leq n$ and $1\leq j\leq n_{i+1}$. Also, we can see from Lemma \ref{thecon} (iii) that
$ e_{i,n_i+j+1}$ is central in $e_{i,n_i+j}^\upharpoonright$. This shows that
$$0=e_{10}\leq e_{11}\leq\cdots\leq e_{1n_1}=e_1
= e_{1,n_1+1}\leq\cdots\leq e_{1,n_1+n_2}=e_2
= e_{2,n_2+1}\leq\cdots\leq e_n$$
is a subcentral idempotent series of $S$.
\end{proof}

The following is easy to be verified and so its proof is omitted.

\begin{lem}\label{phie}
Let $e$, $f$ be idempotents in an inverse semigroup $S$ such that $f$ is central in $e^\upharpoonright$. Then the mapping $\varphi_{e,f}: e^\upharpoonright\rightarrow f^\upharpoonright$ defined by $a\varphi_{e,f}=fa$ is a homomorphism, ${\rm im}\varphi_{e,f}=fe^\upharpoonright=F_{e,f} $ and $\ker\varphi_{e,f}=\{(a,b)\in S\times S \mid fa=fb\}$.
\end{lem}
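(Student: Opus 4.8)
The plan is to dispose of the three assertions in order, observing that only the homomorphism claim carries any content: once $\varphi_{e,f}$ is known to be a well-defined homomorphism, the descriptions of its image and of its kernel are essentially formal.

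First I would pin down where $\varphi_{e,f}$ takes values. For $a\in e^\upharpoonright$ we have $a\varphi_{e,f}=fa\in fe^\upharpoonright=F_{e,f}$ by definition, and Lemma \ref{fgeq}(iii) (together with the remark following it) tells us that $F_{e,f}$ is an inverse subsemigroup of $e^\upharpoonright$ with identity $f$; thus $\varphi_{e,f}$ really is a map from $e^\upharpoonright$ into $F_{e,f}$. Next comes the only genuine computation. Let $a,b\in e^\upharpoonright$; since $e^\upharpoonright$ is a subsemigroup of $S$ we also have $ab\in e^\upharpoonright$, and since $f$ is central in $e^\upharpoonright$ and $a\in e^\upharpoonright$ we have $af=fa$. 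Hence
\[
(a\varphi_{e,f})(b\varphi_{e,f})=(fa)(fb)=f(af)b=f(fa)b=f^{2}(ab)=f(ab)=(ab)\varphi_{e,f},
\]
where the second equality uses that $f$ is central in $e^\upharpoonright$ and the penultimate one uses $f^{2}=f$. This shows $\varphi_{e,f}$ is a homomorphism.

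The image is then immediate: $\mathrm{im}\,\varphi_{e,f}=\{fa:a\in e^\upharpoonright\}=fe^\upharpoonright=F_{e,f}$, which is precisely the definition of $F_{e,f}$. For the kernel, recall that the congruence induced by any semigroup homomorphism $\psi$ on its domain is $\{(x,y):x\psi=y\psi\}$; specializing to $\psi=\varphi_{e,f}$ on $e^\upharpoonright$ gives $\ker\varphi_{e,f}=\{(a,b)\in e^\upharpoonright\times e^\upharpoonright:fa=fb\}$, which is the relation asserted (understood, as usual, as a congruence on the domain $e^\upharpoonright$). If one prefers to argue directly, it is routine to check that this relation is reflexive, symmetric, transitive and compatible with multiplication, again using that $f$ is central in $e^\upharpoonright$.

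The main — and admittedly very mild — obstacle is just the bookkeeping about the ambient semigroups: one has to be sure the target actually contains the image, i.e. that $fe^\upharpoonright=F_{e,f}$ is an inverse subsemigroup of $e^\upharpoonright$ with identity $f$, so that $\varphi_{e,f}$ is a homomorphism of inverse monoids and not merely a multiplicative map; this is exactly what Lemma \ref{fgeq}(iii) and its accompanying remark supply. With that in hand the verification collapses to the single displayed line above, which is why the statement can safely be recorded without proof.
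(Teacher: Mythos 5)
Your verification is correct, and it is exactly the routine check the paper had in mind — the authors omit the proof entirely ("easy to be verified"), so there is no alternative argument to compare against: the displayed computation $(fa)(fb)=f(af)b=f(fa)b=f(ab)$, using only that $e^\upharpoonright$ is a subsemigroup and that $f$ commutes with every element of $e^\upharpoonright$, together with the formal identification of image and kernel, is all that is needed. Two small caveats. First, you invoke Lemma \ref{fgeq}(iii) and its following remark, but that lemma is stated only for \emph{factorizable} inverse monoids, whereas Lemma \ref{phie} is asserted for an arbitrary inverse semigroup; the fact you actually need (that $fe^\upharpoonright=F_{e,f}$ is an inverse submonoid of $e^\upharpoonright$ with identity $f$) is the general observation made in Section 2 when $F_{e,f}$ is defined, and in any case it also falls out of your own computation, since the image of a homomorphism is a subsemigroup and $f(fa)=(fa)f=fa$. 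Second, you silently replace the stated codomain $f^\upharpoonright$ by $F_{e,f}$; this is in fact the only tenable reading, because the image does not lie in $f^\upharpoonright$ at all — for instance $e\varphi_{e,f}=fe=e$ belongs to the image, while $e\geq f$ forces $e=f$ — rather, $\mathrm{im}\,\varphi_{e,f}=F_{e,f}\subseteq e^\upharpoonright$ (indeed $e(fa)=(ef)a=ea=e$, so $fa\geq e$). It would strengthen your write-up to say explicitly that the target in the statement should be read as $e^\upharpoonright$ (or $F_{e,f}$), and likewise that $\ker\varphi_{e,f}$ is the congruence $\{(a,b)\in e^\upharpoonright\times e^\upharpoonright\mid fa=fb\}$ on the domain, as you already note.
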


In the following we shall write $\ker\varphi_{e,f}$ in the above lemma simply as $\rho_{e,f}$.
Also, $\varphi_{0,e}$ (resp., $\rho_{0,e}$) will be simply write as $\varphi_e$ (resp., $\rho_e$) if $e$ is a central idempotent in $S$.

\begin{lem}\label{iso2}
Let $T$ be a subsemigroup of a semigroup $S$ and $\rho$ be a congruence on $S$. Write $U = \{s \in S \mid (\exists\, t \in T) \,\, s \rho t\}$. Then
\begin{itemize}
\item[${ (i)}$] $U$ is a subsemigroup of $S$, and it is also  an inverse semigroup if $S$ is an inverse semigroup;

\item[${ (ii)}$] $\rho_{_U}=\rho\cap (U\times U)$ and $\rho_{_T}=\rho\cap (T\times T)$ are congruences on $U$ and $T$ resp., and $U/\rho_{_U}\cong T/\rho_{_T}$.
\end{itemize}
\end{lem}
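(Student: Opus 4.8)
The plan is to recognise this as the inverse--semigroup analogue of the second isomorphism theorem: $U$ is the $\rho$-saturation of $T$ (the union of those $\rho$-classes that meet $T$), and the isomorphism in (ii) will be produced from a single natural homomorphism together with the first isomorphism theorem for semigroups.

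For (i), I would argue directly. Given $u_1,u_2\in U$, choose $t_1,t_2\in T$ with $u_i\,\rho\,t_i$; since $\rho$ is a congruence, $u_1u_2\,\rho\,t_1t_2$, and $t_1t_2\in T$ because $T$ is a subsemigroup, so $u_1u_2\in U$ and $U$ is a subsemigroup. When $S$ is inverse I would invoke the standard fact that every congruence on an inverse semigroup is compatible with the inverse operation (equivalently, $S/\rho$ is inverse and the natural surjection preserves inverses; see \cite{howie} or \cite{lawson}): from $u\,\rho\,t$ with $t\in T$ one gets $u^{-1}\,\rho\,t^{-1}$ and $t^{-1}\in T$, hence $u^{-1}\in U$. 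Since a subsemigroup of an inverse semigroup that is closed under inversion is automatically an inverse subsemigroup (it is regular, and its idempotents, being idempotents of $S$, commute), this completes (i).

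For (ii), first observe that $\rho_U=\rho\cap(U\times U)$ is an equivalence on $U$, and if $(a,b)\in\rho_U$ and $c\in U$ then $(ca,cb),(ac,bc)\in\rho$ with all four products lying in $U$, so these pairs lie in $\rho_U$; hence $\rho_U$ is a congruence on $U$, and the same argument shows $\rho_T$ is a congruence on $T$. Since $t\,\rho\,t$ for every $t\in T$ we have $T\subseteq U$, so composing the inclusion $T\hookrightarrow U$ with the quotient map $U\to U/\rho_U$ yields a homomorphism $\psi\colon T\to U/\rho_U$ with $t\psi=t\rho_U$. It is onto: given $u\in U$, pick $t\in T$ with $u\,\rho\,t$; then $u,t\in U$ and $u\,\rho\,t$, so $u\rho_U=t\rho_U=t\psi$. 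Its kernel is $\{(t_1,t_2)\in T\times T : t_1\,\rho_U\,t_2\}=\rho\cap(T\times T)=\rho_T$. The first isomorphism theorem for semigroups then gives $T/\rho_T\cong\mathrm{im}\,\psi=U/\rho_U$, as claimed.

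Essentially every step is routine bookkeeping with the congruence $\rho$; the only place that draws on a genuine structural fact is the closure of $U$ under inversion in (i), which relies on congruences of inverse semigroups respecting the inverse operation. That is the step I would state most carefully. Everything else reduces to the restriction-of-a-congruence computation and a single application of the fundamental homomorphism theorem.
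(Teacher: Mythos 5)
Your argument is correct and matches the paper's own proof essentially step for step: part (i) is the same direct closure computation (the paper likewise uses $(s_1^{-1},s^{-1})\in\rho$ with $s_1\in T$, so both proofs tacitly rely on $T$ being closed under inversion, which holds in the paper's applications though it is not literally among the hypotheses), and part (ii) is the same map $\psi\colon T\to U/\rho_{_U}$, $t\mapsto t/\rho_{_U}$, shown to be a surjective homomorphism with kernel $\rho_{_T}$, followed by the first isomorphism theorem. No further changes are needed.
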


\begin{proof}
(i) For every $s,t\in U$, there exist $s_1, t_1\in T$ such that $(s_1,s),(t_1,t)\in\rho$. Notice that $T$ is a subsemigroup of $S$, we have that $(s_1t_1, st)\in \rho$ and so $st\in U$. This shows that $U$ is a subsemigroup of $S$. Also, we have that $(s_1^{-1},s^{-1})\in\rho$ if $S$ is an inverse semigroup. Hence $s^{-1}\in U$ and so $U$ is an inverse subsemigroup of $S$.

(ii) It is easy to see that $\rho_{_U}=\rho\cap (U\times U)$ and $\rho_{_T}=\rho\cap (T\times T)$ are congruences on $U$ and $T$ respectively. To show the remaindering half, we shall consider the mapping $\psi: T\rightarrow U/\rho_{_U}$  defined by $t\psi=t/\rho_{_U}$. We have
$$ (s\psi)(t\psi)=(s/\rho_{_U})(t/\rho_{_U})=st/\rho_{_U}=(st)\psi$$
for every $s,t\in T$. This shows that $\psi$ is a semigroup homomorphism from $T$ to $U/\rho_{_U}$. We also have that $\ker\psi=\rho_{_T}$, since
$$ s\psi=t\psi \Longleftrightarrow s/\rho_{_U}=t/\rho_{_U} \Longleftrightarrow (s,t)\in\rho \Longleftrightarrow s/\rho_{_T}=t/\rho_{_T}.$$
Notice that there exist $s_1\in T$ such that $(s_1,s)\in\rho$ for every given $s\in U$, it follows that $s_1\psi=s_1/\rho_{_U}=s/\rho_{_U}$ and so $\psi$ is surjective. This implies that $U/\rho_{_U}\cong T/\rho_{_T}$, as required.
\end{proof}

\begin{lem}\label{iso2pe}
Let $S$ be a factorizable inverse  monoid with semilattice of idempotents $E$ and group of units $G$ such that the natural connection $\theta$ is a dual isomorphism from $E$ to a sublattice of $L(G)$, For every  $e_1,\,  e,\, f_1, \, f \in E$, the following statements are true:
\begin{itemize}
\item[${ (i)}$] If $e$ is central in $S$ and  $e\vee f$ exists, then $F_{f,e\vee f}\cong F_{ef,f}$;
\item[${ (ii)}$]  If $e_1$ is central in $e^\upharpoonright$ and  $f_1$ is central in $f^\upharpoonright$, then
$e_1(e \vee f_1)$ is central in $[e_1(e \vee f)]^\upharpoonright$, $f_1(e_1 \vee f)$ is central in $[f_1(e \vee f)]^\upharpoonright$ and $F_{e_1(e \vee f),e_1(e \vee f_1)}\cong F_{f_1(e \vee f),f_1(e_1 \vee f)}$.
\end{itemize}
\end{lem}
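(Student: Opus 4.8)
My plan is to reduce everything to the description of the filters and factor monoids in terms of the natural connection $\theta$ supplied by Lemma~\ref{fgeq}, and then to exploit the second-isomorphism-type statement of Lemma~\ref{iso2} together with the standard isomorphism theorems for subgroup lattices. Recall from Lemma~\ref{fgeq}(iii) that for central $f$ in $e^\upharpoonright$ we have $F_{e,f}=[e,f]\cdot e\theta$ with group of units $f\cdot e\theta$, and that under our hypotheses $\theta$ is a dual isomorphism onto a sublattice of $L(G)$, so meets in $E$ correspond to joins of subgroups and vice versa. The factor $F_{e,f}$ is built from the interval $[e,f]$ in $E$ and the subgroup $e\theta$; since $\theta$ is a dual lattice isomorphism, $F_{e,f}$ is determined up to isomorphism by the pair $(e\theta, f\theta)$ with $f\theta\leq_g e\theta$, and in fact one checks $F_{e,f}\cong F_{e',f'}$ whenever $(e\theta,f\theta)$ and $(e'\theta,f'\theta)$ are "the same" as pairs of comparable subgroups, e.g.\ related by conjugation in $G$ or by a group isomorphism. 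This principle is what all parts will invoke.

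For part~(i): since $e$ is central in $S$, Lemma~\ref{thecon}(ii) gives $e\theta\unlhd G$. By Lemma~\ref{thecon}(iii), $e\vee f$ is central in $f^\upharpoonright$, so $F_{f,e\vee f}$ makes sense and by Lemma~\ref{fgeq}(iii) equals $[f,e\vee f]\cdot f\theta$ with group of units $(e\vee f)\cdot f\theta$. On the other side, $ef=e\wedge f\leq f$ and $F_{ef,f}=[ef,f]\cdot(ef)\theta$ with units $f\cdot(ef)\theta$. Because $\theta$ is a dual isomorphism, $(ef)\theta=(e\wedge f)\theta=(e\theta)\vee(f\theta)=(e\theta)(f\theta)$ (the last equality since $e\theta\unlhd G$), while $(e\vee f)\theta=(e\theta)\cap(f\theta)$. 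The interval $[f,e\vee f]$ in $E$ maps under $\theta$ to the interval $[(e\theta)\cap(f\theta),\,f\theta]$ in $L(G)$, and $[ef,f]$ maps to $[f\theta,\,(e\theta)(f\theta)]$. The second isomorphism theorem for groups gives a lattice isomorphism between these two intervals, i.e.\ between $[f,e\vee f]$ and $[ef,f]$ (order-reversing, consistent with $\theta$ being order-reversing on each side). To promote this to a monoid isomorphism $F_{f,e\vee f}\cong F_{ef,f}$, I would apply Lemma~\ref{iso2}: take $S$ there to be $f^\upharpoonright$ (a factorizable inverse monoid by the remark after Lemma~\ref{fgeq}), $\rho$ the congruence $\rho_{f,\,ef}$ (note $ef$ is central in $f^\upharpoonright$ since $e$ is central in $S$), and $T=F_{f,e\vee f}$; then $U$ is the saturation of $T$, which one computes to be exactly $F_{ef,f}\cap$(something)$=F_{ef,f}$ after identifying the units, and $T/\rho_T\cong U/\rho_U$ collapses — more carefully, one shows $\varphi_{f,ef}$ restricted to $F_{f,e\vee f}$ is injective (its kernel meets $F_{f,e\vee f}$ trivially because $(e\vee f)\theta\cap(ef\text{-part})$ is trivial) and surjective onto $F_{ef,f}$, giving the isomorphism directly.

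For part~(ii): first the centrality assertions. That $e_1(e\vee f_1)$ is central in $[e_1(e\vee f)]^\upharpoonright$ follows by combining Lemma~\ref{thecon}(iv) (which gives $e_1\vee f_1$ central in $(e\vee f_1)^\upharpoonright$, hence by an iterate $e_1(e\vee f_1)$-type statements) with Lemma~\ref{thecon}(v) (which lets us multiply a "central in a filter" relation by a central-in-$S$ idempotent); the symmetric statement for $f_1(e_1\vee f)$ is identical with the roles of $e,e_1$ and $f,f_1$ swapped. For the isomorphism $F_{e_1(e\vee f),\,e_1(e\vee f_1)}\cong F_{f_1(e\vee f),\,f_1(e_1\vee f)}$, I translate both factors through $\theta$ into pairs of subgroups of $G$; the point is that both pairs turn out to be conjugate (or equal) as pairs of comparable subgroups inside the common ambient group $(e\vee f)\theta=(e\theta)\cap(f\theta)$, essentially an instance of Zassenhaus's butterfly lemma for the subgroups $e_1\theta,\,e\theta$ and $f_1\theta,\,f\theta$. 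Concretely, $F_{e_1(e\vee f),\,e_1(e\vee f_1)}$ has group of units corresponding to a quotient of the form $(e_1\theta\cdot((e\theta\cap f\theta)))\,/\,(e_1\theta\cdot(e\theta\cap f_1\theta))$-flavored object, and Zassenhaus says this is isomorphic to the symmetric expression with $e\leftrightarrow f$, $e_1\leftrightarrow f_1$; the interval parts of the factor monoids match up the same way since $\theta$ converts the lattice identities used in Zassenhaus's proof into the corresponding identities in $E$. Then, just as in part~(i), Lemma~\ref{iso2} (or a direct check that the relevant $\varphi$ restricts to an isomorphism) upgrades the group/lattice isomorphism to an isomorphism of the factor inverse monoids.

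The main obstacle will be part~(ii): establishing the centrality statements needs a careful bookkeeping of which of Lemma~\ref{thecon}(iii)--(v) applies at each step (the hypotheses "central in $S$" versus "central in a filter" must be tracked precisely, and one may need to first pass to the filter $(e\vee f)^\upharpoonright$ and work there), and the isomorphism is genuinely a semilattice-enriched Zassenhaus lemma — one must verify that the order-reversing dual isomorphism $\theta$ transports the Zassenhaus isomorphism on the unit groups simultaneously with the bijection of the underlying semilattice intervals, so that the two data glue to a single monoid isomorphism. I expect the cleanest route is to prove once and for all a lemma stating: if $f\theta\leq_g e\theta$, $f'\theta\leq_g e'\theta$, and there is an isomorphism $e\theta\to e'\theta$ carrying $f\theta$ onto $f'\theta$, then $F_{e,f}\cong F_{e',f'}$; then parts~(i) and~(ii) are immediate from the group-theoretic second-isomorphism and Zassenhaus isomorphisms respectively.
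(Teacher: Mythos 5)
Your argument never actually establishes the isomorphisms of factor monoids: everything is delegated to the ``principle'' that $F_{e,f}$ is determined up to isomorphism by the abstract pair $(e\theta,f\theta)$, so that a group isomorphism $e\theta\to e'\theta$ carrying $f\theta$ onto $f'\theta$ (second isomorphism theorem in (i), Zassenhaus in (ii)) automatically yields $F_{e,f}\cong F_{e',f'}$. That principle is nowhere proved, and it is not true at the stated level of generality: by Lemma \ref{fgeq}(iii), $F_{e,f}=[e,f]\cdot e\theta$, and the semilattice part $[e,f]$ corresponds under $\theta$ to those subgroups between $f\theta$ and $e\theta$ that belong to the sublattice $E\theta$; an abstract isomorphism of subgroup pairs need not respect $E\theta$, so it cannot by itself control the factor monoid. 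The paper uses no such transport device: for (i) it works with the concrete congruence $\rho_e$ (multiplication by the central idempotent $e$), proves the two computations $f^\upharpoonright\rho_e=(ef)^\upharpoonright$ and $\rho_e\cap(f^\upharpoonright\times f^\upharpoonright)=\rho_{f,e\vee f}$, and then Lemmas \ref{iso2} and \ref{phie} deliver the isomorphism; for (ii), after the centrality statements (obtained, as you guessed, from Lemma \ref{thecon}(iv) followed by (v)), it does not redo Zassenhaus but applies (i) twice and verifies the single identity $e_1(e\vee f_1)\vee(e\vee f)=(e_1\vee f)\wedge(e\vee f_1)$ by pushing it through $\theta$. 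Those computations are the real content of the lemma and your proposal leaves all of them undone; moreover your sketch of how Lemma \ref{iso2} would be used is not workable as written: $\rho_{f,ef}$ is not a defined object (the notation requires the second index to lie above the first), and no multiplication map can restrict to a bijection $F_{f,e\vee f}\to F_{ef,f}$, since already in ${\cal K}(\mathbb{Z}_4\times\mathbb{Z}_2)$ with $e\theta=\mathbb{Z}_4\times 0$ and $f\theta=0\times\mathbb{Z}_2$ these sets have $3$ and $7$ elements respectively.

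There is also a concrete error in your treatment of (i): the second isomorphism theorem identifies the interval $[e\theta\cap f\theta,\,f\theta]$ with $[e\theta,\,(e\theta)(f\theta)]$, not with $[f\theta,\,(e\theta)(f\theta)]$ as you assert, and correspondingly the factor isomorphic to $F_{f,e\vee f}$ is $F_{ef,e}$, not $F_{ef,f}$. (The ``$F_{ef,f}$'' in the printed statement is evidently a misprint for $F_{ef,e}$: the paper's proof produces $(ef)^\upharpoonright/\rho_{ef,e}$, i.e.\ $F_{ef,e}$, and it is the instance $F_{e_1(e\vee f_1)(e\vee f),\,e_1(e\vee f_1)}$ of $F_{ef,e}$ that is invoked in the proof of (ii); note also that $F_{ef,f}$ need not even be defined, since $f$ need not be central in $(ef)^\upharpoonright$, and the example above shows $F_{f,e\vee f}\not\cong F_{ef,f}$ when it is.) Your write-up asserts the false pairing as if it were the second isomorphism theorem instead of detecting and correcting the misprint, so even granting a repaired transport lemma the argument for (i) --- and hence the reduction you intend for (ii) --- does not go through as stated.
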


\begin{proof}
(i)
For every $x\in f^\upharpoonright \rho_e$, there exists $a\in f^\upharpoonright$ such that $(a,x)\in \rho_e$. It follows that $ex=ea$ and $af=f$. Notice that $e$ is central in $S$, we have
$$ xef=exf=eaf=ef.$$
This shows that $x\geq ef$ and so $f^\upharpoonright \rho_e\subseteq (ef)^\upharpoonright$.

Suppose that $s=hg\in (ef)^\upharpoonright$, where $h\in E$ and $g\in G$. Then we may assume that $h\geq ef$ and $g\geq ef$ by Lemma \ref{fgeq}, that is, $g\in (ef)\theta$. Also, we have that $e\theta\unlhd G$, since $e$ is central in $S$. Notice that $\theta$ is a dual isomorphism from $E$ to a sublattice of $L(G)$ , it follows that
$$ (ef)\theta=(e\wedge f)\theta=(e\theta)\vee(f\theta)=(e\theta)(f\theta)$$
and so there exist $g_1\in e\theta$ and $g_2\in f\theta$ such that $g=g_1g_2$. Let $x=(he\vee f)g_2$. Then we have that $eh\geq ef$ and $eh=eeh\vee ef\geq e(eh\vee f)\geq e(eh)=eh$, since $h\geq ef$. This implies that $e(eh\vee f)=eh$ and
$$ex=e(eh\vee f)g_2=ehg_2=heg_2=heg_1g_2=heg=ehg=es$$
and so $(x,s)\in \rho_e$. Notice that $x\geq f$, we get that $s\in f^\upharpoonright \rho_e$. This shows that $f^\upharpoonright \rho_e\supseteq (ef)^\upharpoonright$ and so $f^\upharpoonright \rho_e= (ef)^\upharpoonright$.

Let $(a,b)\in\rho_{f,e\vee f}$. Then we have that $(e\vee f)a=(e\vee f)b$ and so $ea=e(e\vee f)a=e(e\vee f)b=eb$. It follows that $(a,b)\in \rho_e\cap f^\upharpoonright\times f^\upharpoonright$ and so $\rho_{f,e\vee f}\subseteq \rho_e\cap f^\upharpoonright\times f^\upharpoonright$.
Conversely, suppose that $(s_1,s_2)\in \rho_e$, where $s_i=h_ig_i\in f^\upharpoonright$, $h_i\in E$, $g_i\in G$,  $i=1,2$. Then we have that $es_1=es_2$, $fs_1=f=fs_2$ and we may assume that $g_i\geq f$ and $h_i\geq f$, $i=1,2$, by Lemma \ref{fgeq}. Also, $e\theta\unlhd G$ tells us that $e\theta\cap f\theta\unlhd f\theta$. Notice that $\theta$ is a dual isomorphism from $E$ to a sublattice of $L(G)$ , it follows that
\begin{align*}
[(e\vee f)h_i]\theta&=[(e\vee f)\wedge h_i]\theta=(e\theta\wedge f\theta) \vee h_i\theta  \\
&=(e\theta \cap f\theta)h_i\theta=(e\theta)(h_i\theta)\cap (f\theta)(h_i\theta) \\
&=(e\theta\vee h_i\theta)\wedge (f\theta\vee h_i\theta) =(e\wedge h_i)\theta \wedge (f\wedge h_i)\theta  \\
&=(eh_i \vee fh_i)\theta
\end{align*}
and so $(e\vee f)h_i=eh_i \vee fh_i$, $i=1,2$, since $\theta$ is injective . Notice that $g_ig_i^{-1}=g_i^{-1}g_i=1\geq eh_i, \, fh_i$, $i=1,2$, it follows from \cite[Section 1.4 Proposition 18]{lawson} that
\begin{align*}
(e\vee f)s_1&=(e\vee f)h_1g_1=(eh_1\vee fh_1)g_1=eh_1g_1\vee fh_1g_1 \\
&=es_1\vee fs_1=es_2\vee fs_2=eh_2g_2\vee fh_2g_2=(eh_2\vee fh_2)g_2  \\
&=(e\vee f)h_2g_2=(e\vee f)s_2.
\end{align*}
This shows that $(s_1,s_2)\in \rho_{f,e\vee f}$ and so $\rho_{f,e\vee f}= \rho_e\cap f^\upharpoonright\times f^\upharpoonright$.

It is clear that $\rho_e\cap (ef)^\upharpoonright\times (ef)^\upharpoonright=\rho_{ef,e}$ and so we have that $f^\upharpoonright/\rho_{f,e\vee f}\cong (ef)^\upharpoonright/\rho_{ef,e}$ by Lemma \ref{iso2}. Now, we can see from Lemma \ref{phie} that $F_{f,e\vee f}\cong F_{ef,f}$, as required.

(ii)
Since $f_1$ is central in $f^\upharpoonright$, it follows from Lemma \ref{thecon} (iv) that $e\vee f_1$ is central in $(e\vee f)^\upharpoonright$. Notice that $(e\vee f)^\upharpoonright$ is an inverse subsemigroup of $ e^\upharpoonright$ and $e_1$ is central in $e^\upharpoonright$, we can see from Lemma \ref{thecon} (v) that $e_1(e \vee f_1)$ is central in $[e_1(e \vee f)]^\upharpoonright$. Similarly, We can see that $f_1(e_1 \vee f)$ is central in $[f_1(e \vee f)]^\upharpoonright$.

In the following we prove that $F_{e_1(e \vee f),e_1(e \vee f_1)}\cong F_{f_1(e \vee f),f_1(e_1 \vee f)}$. It is easy to see that $e\vee f_1\geq e\vee f$, since $f_1\geq f$. Thus we can see from (i) that
$$F_{e_1(e \vee f),e_1(e \vee f_1)}=F_{e_1(e \vee f_1)(e \vee f),e_1(e \vee f_1)}
\cong F_{e \vee f,e_1(e \vee f_1)\vee (e \vee f)}.$$
Notice that $e_1\geq e$, $f_1\geq f$, $e_1$ is central in $e^\upharpoonright$ and $\theta$ is a dual isomorphism from $E$ to a sublattice of $L(G)$, we have that $e_1\theta\unlhd e\theta$ and $(e\vee f_1)\theta\leq_g (e\vee f)\theta \leq_g e\theta$. It follows that
\begin{align*}
[e_1(e \vee f_1)\vee (e \vee f)]\theta&=(e_1\theta\vee (e\vee f_1)\theta)\wedge(e\vee f)\theta
=(e_1\theta)((e\vee f_1)\theta)\cap (e\vee f)\theta  \\
&=(e_1\theta \cap (e\vee f)\theta)((e\vee f_1)\theta)=(e_1\theta \cap e\theta\cap f\theta))((e\vee f_1)\theta)  \\
&=(e_1\theta \cap f\theta))((e\vee f_1)\theta) = ((e_1\vee f)\theta)\vee((e\vee f_1)\theta) \\
&=[(e_1\vee f)\wedge (e\vee f_1)]\theta.
\end{align*}
Since $\theta$ is a bijection, this shows that $e_1(e \vee f_1)\vee (e \vee f)=(e_1\vee f)\wedge (e\vee f_1)$ and so
$$ F_{e_1(e \vee f),e_1(e \vee f_1)} \cong F_{e \vee f,e_1(e \vee f_1)\vee (e \vee f)}
=F_{e \vee f,(e_1\vee f)\wedge (e\vee f_1)}.$$
Similarly, we may get that
$F_{f_1(e \vee f),f_1(e_1 \vee f)}\cong F_{e \vee f,(e_1\vee f)\wedge (e\vee f_1)}$
and so
$$F_{e_1(e \vee f),e_1(e \vee f_1)}\cong F_{f_1(e \vee f),f_1(e_1 \vee f)},$$
as required.
\end{proof}

The key to provide an analogue in inverse semigroup theory of Jordan-H\"{o}lder Theorem in group theory is the following proposition.

\begin{prop}\label{schre}
Let $S$ be a factorizable inverse  monoid with semilattice of idempotents $E$ and group of units $G$ such that the natural connection $\theta$ is a dual isomorphism from $E$ to a sublattice of $L(G)$. Then any two subcentral (resp., central) idempotent series of $S$ possess isomorphic subcentral (resp., central) refinements.
\end{prop}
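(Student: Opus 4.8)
The plan is to follow the classical proof of the Schreier Refinement Theorem from group theory, replacing normal subgroups and quotients by subcentral idempotents and the factors $F_{e,f}$, and replacing the group-theoretic ``Zassenhaus (butterfly) lemma'' by Lemma \ref{iso2pe}(ii). So suppose we are given two subcentral idempotent series of $S$, say
$$0=e_0\leq e_1\leq\cdots\leq e_m=1 \qquad\text{and}\qquad 0=f_0\leq f_1\leq\cdots\leq f_n=1.$$
First I would insert, between consecutive terms $e_i\leq e_{i+1}$ of the first series, the chain of idempotents
$$e_i=e_i\vee f_0\leq e_i\vee(e_{i+1}f_1)\leq e_i\vee(e_{i+1}f_2)\leq\cdots\leq e_i\vee(e_{i+1}f_n)=e_{i+1},$$
obtaining a chain of length $mn$; symmetrically refine the second series by inserting between $f_j\leq f_{j+1}$ the idempotents $f_j\vee(f_{j+1}e_i)$ for $0\leq i\leq n$. (Here all the joins exist because the hypothesis that $\theta$ is a dual isomorphism onto a sublattice of $L(G)$ forces $E$ to be a lattice, each join of idempotents corresponding to the meet of the associated subgroups.)

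Next I would verify that these refinements are genuine \emph{subcentral} idempotent series. The crucial point is that each inserted step is central in the filter generated by the previous term; this is exactly what Lemma \ref{thecon}(iv) and (v) deliver: starting from ``$e_{i+1}f_j$ is central in $(e_{i+1}f_{j+1})^\upharpoonright$'' — which holds by Lemma \ref{thecon}(v) applied with the given data, using that $f_j$ is subcentral (so can be sandwiched in a central step after refining) — one joins with $e_i$ and applies Lemma \ref{thecon}(iv) to conclude $e_i\vee(e_{i+1}f_j)$ is central in $(e_i\vee(e_{i+1}f_{j+1}))^\upharpoonright$. For the \emph{central} case (all $e_{i+1}$ central in $S$) the analogous steps use Lemma \ref{thecon}(iii) in place of (iv), (v). One should also handle the degenerate situation where a $f_j$ is only subcentral rather than central in the relevant filter by first passing to a genuine subcentral series for it as in Corollary \ref{subcs}, or observe that it suffices to prove the statement for series whose consecutive-step hypotheses already match the hypotheses of Lemma \ref{thecon}(iv)/(v); I would phrase the induction so that this bookkeeping is clean.

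Then I would establish the term-by-term isomorphism of factors. The factor of the refined first series at the step $e_i\vee(e_{i+1}f_j)\leq e_i\vee(e_{i+1}f_{j+1})$ is
$$F_{e_i\vee(e_{i+1}f_j),\,e_i\vee(e_{i+1}f_{j+1})},$$
and the corresponding factor of the refined second series is $F_{f_j\vee(f_{j+1}e_i),\,f_j\vee(f_{j+1}e_{i+1})}$. Applying Lemma \ref{iso2pe}(ii) with the substitution $(e_1,e)\mapsto(e_{i+1}f_{j+1},e_{i+1}f_j)$ after suitable renaming — more precisely, matching the ``butterfly'' data of Lemma \ref{iso2pe}(ii) to the quadruple $(e_i,e_{i+1},f_j,f_{j+1})$ — gives exactly that these two factors are isomorphic. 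Thus the two refinements have the same length $mn$ and a factor-preserving bijection, i.e. they are isomorphic, which is the assertion.

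I expect the main obstacle to be purely organizational rather than deep: making the indexing of the double refinement consistent (so that the last term of one block equals the first term of the next, which rests on identities like $e_i\vee(e_{i+1}f_n)=e_i\vee e_{i+1}=e_{i+1}$), and matching the rather rigid variable pattern in the statement of Lemma \ref{iso2pe}(ii) to the generic Zassenhaus quadruple by the correct substitution of $e_{i+1}f_j$, $e_i$, etc. A secondary subtlety is ensuring that every centrality hypothesis needed to even \emph{form} the factors $F_{e,f}$ (namely that the upper idempotent is central in the filter of the lower one) is in force at each inserted step before invoking Lemma \ref{iso2pe}; this is where Lemma \ref{thecon}(iv), (v) and, for the central version, (iii) are used, and one must be careful that the lattice identities used there (images of joins/meets under $\theta$) apply, which they do precisely under the standing hypothesis on $\theta$.
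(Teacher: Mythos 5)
Your overall strategy (a Schreier-type double refinement plus the butterfly Lemma \ref{iso2pe}(ii)) is the paper's strategy, but your choice of refinement terms is the wrong one, and the two steps that have to carry the proof do not go through for it. You insert $e_i\vee(e_{i+1}f_j)$ between $e_i$ and $e_{i+1}$, i.e.\ a \emph{join of a meet}; the paper inserts $e_{i+1}(e_i\vee f_j)$, a \emph{meet of a join} (in the paper's indexing, $e_{ij}=e_i(e_{i-1}\vee f_j)$). Under $\theta$ these are $e_i\theta\cap\langle e_{i+1}\theta, f_j\theta\rangle$ versus $\langle e_{i+1}\theta,\, e_i\theta\cap f_j\theta\rangle$, and since the image sublattice of $L(G)$ is not assumed modular these are genuinely different (e.g.\ in ${\cal K}(D_4)$ with the series corresponding to $D_4\rhd\{1,r^2,s,r^2s\}\rhd\{1,s\}\rhd 1$ and $D_4\rhd\{1,r^2,rs,r^3s\}\rhd\{1,rs\}\rhd 1$ one gets $V$ versus $\{1,s\}$ at the same slot). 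Lemma \ref{iso2pe}(ii) is stated and proved precisely for factors of the meet-of-join form $F_{e_1(e\vee f),\,e_1(e\vee f_1)}$; no substitution such as $(e_1,e)\mapsto(e_{i+1}f_{j+1},e_{i+1}f_j)$ turns its conclusion into an isomorphism between your factors $F_{e_i\vee(e_{i+1}f_j),\,e_i\vee(e_{i+1}f_{j+1})}$, so the "gives exactly" step is unsupported.

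The centrality verification has the same defect. To feed Lemma \ref{thecon}(iv) you need (after correcting the orientation in your sketch) that $e_{i+1}f_{j+1}$ is central in $(e_{i+1}f_j)^\upharpoonright$; Lemma \ref{thecon}(v) yields this only when $e_{i+1}$ is central in all of $S$, which is exactly what you do not have in the subcentral case, and the statement itself is false in general: by Lemma \ref{thecon}(ii) it amounts to $\langle e_{i+1}\theta, f_{j+1}\theta\rangle\unlhd\langle e_{i+1}\theta, f_j\theta\rangle$, which fails in the ${\cal K}(D_4)$ example above (take $e_{i+1}\theta=\{1,s\}$, $f_j\theta=\{1,rs\}$, $f_{j+1}=1$: one would need $\{1,s\}\unlhd D_4$). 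The vague remark about "sandwiching after refining" does not repair this. The fix is simply to transpose your construction: use $e_{i+1}(e_i\vee f_j)$ between $e_i$ and $e_{i+1}$ (and dually $f_{j+1}(e_i\vee f_j)$ for the second series); then Lemma \ref{iso2pe}(ii), applied with $e_1=e_{i+1}$, $e=e_i$, $f_1=f_{j+1}$, $f=f_j$, delivers in one stroke both that each inserted step is central in the filter of the preceding term and that corresponding factors are isomorphic, and the endpoint identities $e_{i+1}(e_i\vee f_0)=e_i$, $e_{i+1}(e_i\vee f_n)=e_{i+1}$ give the bookkeeping. That is exactly the paper's proof.
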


\begin{proof}
Suppose that $0=e_0\leq e_1\leq\cdots\leq e_m=1$ and $0=f_0\leq f_1\leq\cdots\leq f_n=1$ are two subcentral idempotent series of $S$. Let $e_{ij}=e_i(e_{i-1}\vee f_j)$, where $1\leq i\leq m$ and $0\leq j\leq n$ and $f_{ij}=f_j(e_i\vee f_{j-1})$, where $0\leq i\leq m$ and $1\leq j\leq n$. Then we have
$$ e_{i0}=e_i(e_{i-1}\vee f_0)=e_ie_{i-1}=e_{i-1}, \,\,\,\, e_{in}=e_i(e_{i-1}\vee f_n)=e_i1=e_{i},$$
$$ f_{0j}=f_j(e_{0}\vee f_{j-1})=f_jf_{j-1}=f_{i-1}, \,\,\,\, f_{mj}=f_j(e_{m}\vee f_{j-1})=f_j1=f_{j}.$$
Also, it is easy to see that  $e_{i,j-1}\leq e_{ij}$ and $e_{ij}$ is central in $e_{i,j-1}^\upharpoonright$. Similarly, $f_{i-1,j}\leq f_{ij}$ and $f_{ij}$ is central in $f_{i-1,j}^\upharpoonright$. Hence
$$ 0=e_{10}\leq e_{11}\leq \cdots\leq e_{1n}\leq e_{20}\leq\cdots\leq e_{2n}\leq \cdots\leq e_{m0}\leq\cdots\leq e_{mn}=1, \eqno(*)$$
and
$$ 0=f_{01}\leq f_{11}\leq \cdots\leq f_{m1}\leq f_{02}\leq\cdots\leq f_{m2}\leq \cdots\leq f_{0n}\leq\cdots\leq f_{mn}=1 \eqno(**)$$
are refined subcentral idempotent series of $0=e_0\leq e_1\leq\cdots\leq e_m=1$ and $0=f_0\leq f_1\leq\cdots\leq f_n=1$, respectively. We can see from Lemma \ref{iso2pe} (ii) that
$$F_{e_{i,j-1},e_{ij}}=F_{e_i(e_{i-1}\vee f_{j-1}),e_i(e_{i-1}\vee f_j)}
\cong F_{f_j(f_{i-1}\vee e_{i-1}),f_j(f_{j-1}\vee e_i)}=F_{f_{i,j-1},f_{ij}}.$$
Also, it is clear that the lengths of the series $(*)$ and $(**)$ are both $mn$. This shows that $(*)$ and $(**)$ are isomorphic.

In a similar manner we may prove that any two central idempotent series of $S$ possess isomorphic central refinements.
\end{proof}

Notice that a composition subcentral idempotent series can not be refined except by repeating terms. Thus from Proposition \ref{schre} we have immediately the following result, which is an appropriate analogue in inverse semigroup theory of Jordan-H\"{o}lder Theorem in group theory.

\begin{thm}\label{jorh}
Let $S$ be a factorizable inverse  monoid with semilattice of idempotents $E$ and group of units $G$ such that the natural connection $\theta$ is a dual isomorphism from $E$ to a sublattice of $L(G)$. Then any two composition subcentral (resp., central) idempotent series of $S$ are isomorphic.
\end{thm}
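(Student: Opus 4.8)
The plan is to deduce Theorem \ref{jorh} from the Schreier-type refinement result Proposition \ref{schre} in exactly the way the classical Jordan--H\"older theorem is derived from Schreier's refinement theorem; the only extra ingredient needed is a description of the ``trivial'' factors that a refinement of a composition series may introduce. I will treat the subcentral and the central cases simultaneously, since the argument is word-for-word the same once ``refinement'' is read as ``subcentral refinement'' or ``central refinement'' accordingly.

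First I would record an elementary fact about factors. For idempotents $e \leq f$ with $f$ central in $e^\upharpoonright$, I claim $F_{e,f}$ is a one-element monoid if and only if $e=f$. Indeed, if $a \geq e$ then $ea=e$ by the definition of the natural partial order, so $F_{e,e}=ee^\upharpoonright=\{e\}$; conversely, if $e<f$ then the interval $[e,f]$ has at least two elements and, since $e'=fe'\in fe^\upharpoonright$ for every $e'\in[e,f]$, we get $[e,f]\subseteq F_{e,f}$ (this also follows from Lemma \ref{fgeq}(iii)), so $|F_{e,f}|\geq 2$. Consequently, in any composition subcentral (resp., central) idempotent series $0=e_0\leq e_1\leq\cdots\leq e_m=1$ of $S$, which by definition has no repeated terms and hence satisfies $e_{i-1}<e_i$ for all $i$, every factor $F_{e_{i-1},e_i}$ is a non-trivial inverse monoid.

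Next I would analyse the shape of a refinement $R$ of a composition series $C$. Deleting the repeated terms from $R$ produces a subcentral (resp., central) idempotent series $R'$ (deletion preserves the centrality conditions, since $g$ central in $h^\upharpoonright$ and $h'^\upharpoonright=h^\upharpoonright$ give $g$ central in $h'^\upharpoonright$), and $R'$ is still a refinement of $C$ because the terms of $C$ are pairwise distinct and hence survive the deletion. As $C$ is a composition series, $R'$ must equal $C$; therefore the distinct terms of $R$ are exactly those of $C$, i.e. $R$ is nothing but $C$ with some terms repeated. Grouping $R$ into its maximal blocks of equal consecutive terms, every factor arising inside a block has the form $F_{e_i,e_i}=\{e_i\}$ and is trivial, whereas the factors arising between consecutive blocks are precisely $F_{e_0,e_1},\dots,F_{e_{m-1},e_m}$, the factors of $C$. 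Hence the non-trivial factors of $R$ are, listed in order, exactly the factors of $C$.

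Finally, let $C$ and $C'$ be two composition subcentral (resp., central) idempotent series of $S$. By Proposition \ref{schre} they admit isomorphic subcentral (resp., central) refinements $R$ and $R'$, so there is a bijection between the factors of $R$ and those of $R'$ sending each factor to an isomorphic one. Since isomorphic monoids have the same cardinality, this bijection carries trivial factors to trivial factors, and therefore restricts to a bijection between the non-trivial factors of $R$ and those of $R'$ that still respects isomorphism. By the previous paragraph these non-trivial factors are exactly the factors of $C$ and of $C'$; in particular $C$ and $C'$ have the same number of factors, hence the same length, and their factors correspond under an isomorphism-preserving bijection, so $C\cong C'$. This proves the theorem. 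The step that requires genuine care --- the ``hard part'', modest as it is, given Proposition \ref{schre} --- is the middle one: verifying that a refinement of a composition series is really just a repetition of its terms and that the newly created factors are precisely the one-element monoids $\{e_i\}$; everything after that is formal.
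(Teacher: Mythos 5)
Your proof is correct and follows essentially the same route as the paper: the paper derives Theorem \ref{jorh} immediately from Proposition \ref{schre} together with the remark that a composition series can only be refined by repeating terms, and your argument is exactly this derivation with the omitted bookkeeping (repeated terms contribute only one-element factors $F_{e,e}=\{e\}$, so the non-trivial factors of the refinements are precisely the factors of the original composition series) written out explicitly.
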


Notice that not every inverse semigroup contains a composition subcentral (central) idempotent series.
Recall that a partially ordered set $X$ satisfies the maximal (resp., minimal) condition if every non-empty subset of $X$ contains at least one maximal (resp., minimal) element. Also, $X$ satisfies the ascending (resp., descending) chain condition if there does not exist an infinite properly ascending chain $\lambda_1<\lambda_2<\cdots$ (resp., descending chain $\lambda_1>\lambda_2>\cdots$) in $X$. It is easy to se that maximal (resp., minimal) condition and ascending (respectively descending) chain condition are identical (see \cite[p.66]{robinson}).

Let $S$ be an inverse semigroup, $E_c$ and $E_{sc}$ the set of all central and subcentral idempotents in $S$, respectively. It is easy to see that both $E_c$ and $E_{sc}$ are partially ordered sets under the natural order.
We have

\begin{prop}\label{jorhex}
Let $S$ be a factorizable inverse monoid with semilattice of idempotents $E$ and group of units $G$ such that the natural connection $\theta$ is a dual isomorphism from $E$ to a sublattice of $L(G)$. Then $S$ has a composition subcentral (resp., central) idempotent series if and only if $E_{sc}$ (resp., $E_c$) satisfies ascending and descending chain conditions.
\end{prop}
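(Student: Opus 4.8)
The plan is to transport to this setting the classical group-theoretic fact that a group admits a composition series precisely when its subnormal subgroups satisfy both chain conditions; here subcentral (resp.\ central) idempotents play the role of subnormal (resp.\ normal) subgroups and the factors $F_{e,f}$ the role of quotients. I would prove the two implications separately, treating the subcentral case in detail, the central one being entirely parallel and in fact slightly simpler.

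For the sufficiency, assume $E_{sc}$ satisfies both chain conditions and build a composition subcentral idempotent series of $S$ from below. Put $e_0=0$. Given a strict chain $0=e_0<e_1<\cdots<e_i$ of idempotents each subcentral in $S$, stop if $e_i=1$; otherwise $1\in e_i^{\upharpoonright}$ (as $e_i\le 1$) and $1$ is central there, so the set $\mathcal{C}_i$ of central idempotents $g$ of the inverse monoid $e_i^{\upharpoonright}$ with $g>e_i$ is non-empty, and, as observed earlier in this section, every such $g$ is subcentral in $S$, i.e.\ $\mathcal{C}_i\subseteq E_{sc}$. The descending chain condition yields a minimal element $e_{i+1}$ of $\mathcal{C}_i$; then $e_i<e_{i+1}$, $e_{i+1}$ is central in $e_i^{\upharpoonright}$ and $e_{i+1}\in E_{sc}$. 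The chain $e_0<e_1<\cdots$ ascends strictly in $E_{sc}$, so by the ascending chain condition the construction halts, which can only happen when $e_i=1$. This produces a subcentral idempotent series $0=e_0<e_1<\cdots<e_n=1$ without repeated terms; moreover any proper subcentral refinement would insert some $g$ with $e_i<g<e_{i+1}$, and then the first term of the refinement strictly above $e_i$ would be a central idempotent of $e_i^{\upharpoonright}$ lying strictly below $e_{i+1}$, contradicting minimality. Hence the series is a composition subcentral idempotent series. For the central version, replace $\mathcal{C}_i$ by $\{g\in E_c:g>e_i\}$ (which always contains $1$) and use the two chain conditions on $E_c$; no passage to filters is needed.

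For the necessity, let $C$ be a composition subcentral idempotent series of $S$ of length $n$. I would first show that every subcentral idempotent series of $S$ without repeated terms has length at most $n$. Given such a series $T$ of length $m$, Proposition \ref{schre} supplies isomorphic subcentral refinements $C'$ of $C$ and $T'$ of $T$; in particular the numbers of their \emph{non-trivial} factors coincide, since the bijection between factors preserves isomorphism, every $F_{a,b}$ with $a<b$ has at least two elements, and a repeated term $e=e$ contributes the one-element factor $F_{e,e}=\{e\}$ (because $ea=e$ for every $a\ge e$). As $C$ admits no proper refinement, $C'$ is obtained from $C$ merely by repeating terms and so has exactly $n$ non-trivial factors; and $T'$ has at least $m$ non-trivial factors, since refining a non-trivial factor again produces at least one non-trivial factor and $T$ already has $m$ of them. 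Hence $m\le n$. Now if $E_{sc}$ failed the ascending chain condition there would be a strictly ascending chain $g_1<g_2<\cdots$ in $E_{sc}$; for each $k$, Corollary \ref{subcs}(ii) refines $g_1<\cdots<g_k$ to a subcentral idempotent series of $S$, which (after deleting repeated terms and, if necessary, adjoining $1$, central in $g_k^{\upharpoonright}$) is a repetition-free subcentral idempotent series of $S$ containing $g_1,\dots,g_k$ among its terms, hence of length at least $k-1$; taking $k>n+1$ contradicts the bound. So $E_{sc}$ satisfies the ascending chain condition, and the descending one follows by the symmetric argument applied to a strictly descending chain. The central case is identical, using the central part of Proposition \ref{schre} and observing that any finite chain of central idempotents, together with $0$ and $1$, is already a central idempotent series.

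The step I expect to require the most care is the bookkeeping in the necessity direction: one must confirm that a repeated term contributes exactly the one-element factor $\{e\}$ — which rests on the identity $ea=e$ holding for every $a$ in the filter $e^{\upharpoonright}$ — so that counting non-trivial factors across isomorphic Schreier refinements really does bound the length. In the sufficiency direction the guiding fact is simply that $1$ is central in every filter $e^{\upharpoonright}$, so above any $e_i\ne 1$ there is always a central idempotent of $e_i^{\upharpoonright}$ with which to continue; the descending chain condition is used to select a minimal such idempotent (which is what prevents refinement) and the ascending chain condition to guarantee that finitely many steps reach $1$.
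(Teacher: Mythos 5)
Your proof is correct, and both implications follow the same overall template as the paper (build the series by DCC-selection of minimal elements and terminate by ACC; bound lengths of arbitrary series by the composition length to get the chain conditions), but the execution differs at two genuine points. In the existence direction the paper picks $e_{i+1}$ minimal among the \emph{subcentral} idempotents above $e_i$ and then has to invoke Corollary \ref{subcs}(ii) to see that the resulting maximal chain really is a (composition) subcentral idempotent series; you instead pick $e_{i+1}$ minimal among the \emph{central} idempotents of $e_i^{\upharpoonright}$ strictly above $e_i$ (legitimately applying DCC on $E_{sc}$, since such idempotents are subcentral by the remark in Section 2), which yields the series property and the impossibility of proper refinement directly from minimality, and incidentally makes no use of the hypothesis on $\theta$ in this direction. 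In the chain-condition direction the paper refines a long chain of subcentral idempotents via Corollary \ref{subcs}(ii) and then simply cites Theorem \ref{jorh} to cap its length by $m$; since the refined series need not be a composition series, the honest justification is exactly the counting you carry out through Proposition \ref{schre}: isomorphic Schreier refinements have the same number of non-trivial factors, a repeated term contributes the one-element factor $F_{e,e}=\{e\}$ (as $ea=e$ for $a\in e^{\upharpoonright}$) while $F_{a,b}\supseteq\{a,b\}$ for $a<b$, so every repetition-free subcentral series has length at most that of a composition series. So your argument is, if anything, a more complete version of the paper's necessity step, and a more self-contained version of its sufficiency step; the price is a little extra bookkeeping, the gain is that the only external inputs are Proposition \ref{schre}, Corollary \ref{subcs}(ii) (used only to refine the chains $g_1<\cdots<g_k$), and the elementary facts about $F_{e,f}$.
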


\begin{proof}
Suppose that $0=e_0< e_1<\cdots< e_m=1$ is a composition subcentral idempotent series of $S$ and $f_1< f_2<\cdots$ is a infinite properly ascending chain of subcentral idempotents. Then we can see from Corollary \ref{subcs} that the chain
$$ 0\leq f_1<\cdots<f_{r+2} $$
of subcentral idempotents  can be refined to a subcentral idempotent series of $S$, and the length of the resulting series is at least $m+1$. However, Theorem \ref{jorh} tells us that its length cannot exceed $m$, a contradiction. This shows that $E_{sc}$ satisfies ascending chain condition. In a similar manner we may prove that $E_{c}$ satisfies descending chain condition.

Conversely, suppose that $E_{sc}$ satisfies ascending and descending chain conditions. If $E_{sc}=\{0,1\}$, then it is easy to see that $0<1$ is a composition subcentral idempotent series of $S$. Suppose that $E_{sc}\neq\{0,1\}$. Then there exists a minimal element $e_1$ in $E_{sc}\setminus \{0,1\}$, since descending chain condition and minimal condition are identical.  Also, there exist minimal elements $e_{i+1}$ in $E_i=(E_{sc}\setminus \{0,1,e_1,\ldots,e_{i}\})\cap e_{i}^\upharpoonright$ for $i\geq 1$ if $E_i\neq\emptyset$. We can therefore get an ascending chain $ e_1<e_2<\cdots$
of subcentral idempotents. Notice that $E_{sc}$ satisfies ascending chain condition, this chain will terminate after finite steps.
That is to say, there exists an integer $n$ such that $E_n=\emptyset$. Let $e_{n+1}=1$. Then we get an ascending chain
$$0=e_0< e_1<e_2<\cdots<e_{n}<e_{n+1}=1 \eqno(*)$$
of subcentral idempotents such that there is no subcentral idempotent $f$ such that $e_i<f<e_{i+1}$ for some $0\leq i\leq n$. We can see from Corollary \ref{subcs} that $(*)$ is a composition subcentral idempotent series of $S$.

The proof that $S$ has a composition central idempotent series if and only if $E_c$ satisfies ascending and descending chain conditions is similar.
\end{proof}

\section{$G$-Nilpotent and $G$-solvable inverse semigroups}

Recall that a normal series
$$1=G_0\unlhd G_1\unlhd \cdots\unlhd G_n=G$$
of a group $G$ is said to be a \emph{central series} (resp., \emph{abelian series}) if $G_{i+1}/G_i \leq_g Z(G/G_i)$ (resp., $G_{i+1}/G_i$ is an abelian group) for all $0\leq i\leq n-1$, and a group $G$ is said to be \emph{nilpotent} (resp., \emph{solvable}) if it has a central (resp., \emph{abelian}) series.
Let $S$ be a factorizable inverse monoid such that the natural connection $\theta$ is a dual isomorphism from $E$ to a sublattice of $L(G)$, where $E$ and $G$ denote the semilattice of idempotents of $S$ and the group of units of $S$ respectively. We shall say that a chain
$$0=e_0\leq e_1\leq\cdots\leq e_n=1$$
of central idempotents in $S$ is a \emph{$G$-nilpotent series} (resp., \emph{$G$-solvable series}) if $F_{e_i,e_{i+1}}\subseteq Z(F_{0,e_{i+1}})$ (resp., $F_{e_i,e_{i+1}}$ is a commutative semigroup) for all $0\leq i\leq n-1$.
An inverse monoid $S$ is said to be \emph{$G$-nilpotent} (resp., \emph{$G$-solvable}) if it has a $G$-nilpotent (resp., \emph{$G$-solvable}) series, and the length of a shortest $G$-nilpotent (resp., \emph{$G$-solvable}) series of $S$ is called the \emph{$G$-nilpotent length} (resp., \emph{$G$-solvable length}) of $S$.
It is easy to see that $G$-nilpotent inverse monoids are $G$-solvable.

\begin{lem}\label{nseq}
Let $S$ be a factorizable inverse monoid such that the natural connection $\theta$ is a dual isomorphism from $E$ to a sublattice of $L(G)$, and 
$$\Gamma: \, 0=e_0\leq e_1\leq\ldots\leq e_n=1$$
a chain of central idempotents in $S$. Then $\Gamma$ is a $G$-nilpotent (resp., $G$-solvable) series if and only if $UF_{e_i,e_{i+1}}$ is contained in $Z(S)$ (resp., is an abelian group) for all $0\leq i\leq n-1$.
\end{lem}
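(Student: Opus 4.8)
The plan is to translate every condition through Lemma~\ref{fgeq}(iii), which identifies $F_{e_i,e_{i+1}}$ with $[e_i,e_{i+1}]\cdot e_i\theta$ and $UF_{e_i,e_{i+1}}$ with $e_{i+1}\cdot e_i\theta$. First I would record the elementary fact that, since $e'e_{i+1}=e'$ for every $e'\in[e_i,e_{i+1}]$, one has $F_{e_i,e_{i+1}}=[e_i,e_{i+1}]\cdot UF_{e_i,e_{i+1}}$, and likewise $F_{0,e_{i+1}}=e_{i+1}S=[0,e_{i+1}]\cdot UF_{0,e_{i+1}}$ with $UF_{0,e_{i+1}}=e_{i+1}G$; thus each of these monoids is generated by its idempotents together with its group of units. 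Since idempotents commute with one another and the centre of a semigroup is a subsemigroup, this reduces the lemma to two commutation checks: $F_{e_i,e_{i+1}}$ is commutative if and only if $UF_{e_i,e_{i+1}}$ is abelian and every $e'\in[e_i,e_{i+1}]$ commutes with every unit of $F_{e_i,e_{i+1}}$; and $F_{e_i,e_{i+1}}\subseteq Z(F_{0,e_{i+1}})$ if and only if $UF_{e_i,e_{i+1}}\subseteq Z(F_{0,e_{i+1}})$ and $[e_i,e_{i+1}]\subseteq Z(F_{0,e_{i+1}})$.

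For the $G$-solvable statement the forward direction is immediate, since $UF_{e_i,e_{i+1}}$ is a subgroup of the commutative semigroup $F_{e_i,e_{i+1}}$. For the converse, assuming $UF_{e_i,e_{i+1}}$ abelian, I would take $e'\in[e_i,e_{i+1}]$ and $u=e_{i+1}g$ with $g\in e_i\theta$; using that $e_{i+1}$ is central in $S$ and that $ge'g^{-1}\le e_{i+1}$, the identity $e'u=ue'$ collapses to $ge'g^{-1}=e'$, which by Lemma~\ref{thecon}(i) and injectivity of $\theta$ is equivalent to $e'\theta\unlhd e_i\theta$. This is the heart of the matter: $e'\theta$ is a subgroup with $e_{i+1}\theta\le e'\theta\le e_i\theta$, the quotient $e_i\theta/e_{i+1}\theta$ makes sense because $e_{i+1}$ is central (Lemma~\ref{thecon}(ii) gives $e_{i+1}\theta\unlhd G$) and is isomorphic to $UF_{e_i,e_{i+1}}$ via $g\mapsto e_{i+1}g$ (kernel $e_i\theta\cap e_{i+1}\theta=e_{i+1}\theta$), hence abelian, so every subgroup between $e_{i+1}\theta$ and $e_i\theta$ — in particular $e'\theta$ — is normal in $e_i\theta$.

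For the $G$-nilpotent statement, $(\Rightarrow)$ is again short: from $UF_{e_i,e_{i+1}}\subseteq Z(F_{0,e_{i+1}})=Z(e_{i+1}S)$ and $u=e_{i+1}g$ one checks, using centrality of $e_{i+1}$ in $S$, that $us=u(e_{i+1}s)$ and $su=(e_{i+1}s)u$ for every $s\in S$, and since $e_{i+1}s\in e_{i+1}S$ and $u$ is central in $e_{i+1}S$ the two right-hand sides agree, whence $us=su$ and $u\in Z(S)$. For $(\Leftarrow)$, assume $UF_{e_i,e_{i+1}}\subseteq Z(S)$; by the reduction it suffices to prove that each $e'\in[e_i,e_{i+1}]$ is central in $e_{i+1}S$, and as in the solvable case this amounts to $he'h^{-1}=e'$ for all $h\in G$, i.e.\ to $e'$ being central in $S$ (Lemma~\ref{thecon}(ii)). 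This follows because $e_{i+1}g\in Z(S)$ commuting with every unit forces $ghg^{-1}h^{-1}\in e_{i+1}\theta$ for all $g\in e_i\theta$ and $h\in G$, so the image of $e_i\theta$ lies in $Z(G/e_{i+1}\theta)$; then the intermediate subgroup $e'\theta/e_{i+1}\theta$ lies in $Z(G/e_{i+1}\theta)$, hence is normal in $G/e_{i+1}\theta$, so $e'\theta\unlhd G$. The only genuine obstacle, present in both non-trivial directions, is exactly this step — promoting a hypothesis on the units of $F_{e_i,e_{i+1}}$ to centrality in $S$ of all the intermediate idempotents $e'\in[e_i,e_{i+1}]$ — and it is here that the dual-isomorphism hypothesis on $\theta$ (in particular its injectivity) is used; the remaining manipulations are routine inverse-monoid calculations resting on the centrality of $e_{i+1}$ in $S$.
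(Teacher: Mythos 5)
Your proposal is correct and follows essentially the same route as the paper: both arguments use Lemma \ref{fgeq}(iii) to write elements of $F_{e_i,e_{i+1}}$ as an intermediate idempotent times a unit, transfer the hypothesis on $UF_{e_i,e_{i+1}}$ to the quotient $e_i\theta/e_{i+1}\theta$ (lying in $Z(G/e_{i+1}\theta)$, resp.\ abelian), and then use injectivity of $\theta$ via Lemma \ref{thecon} to promote normality of the intermediate subgroups $e'\theta$ to centrality of (resp.\ commutation with) the intermediate idempotents $e'\in[e_i,e_{i+1}]$. Your explicit reduction $F_{e_i,e_{i+1}}=[e_i,e_{i+1}]\cdot UF_{e_i,e_{i+1}}$ and the isomorphism $UF_{e_i,e_{i+1}}\cong e_i\theta/e_{i+1}\theta$ merely repackage the paper's direct element computations; no substantive difference or gap.
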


\begin{proof}
It is easy to see that $UF_{e_i,e_{i+1}}$ is an abelian group for all $0\leq i\leq n-1$ if $\Gamma$ is a $G$-solvable series of $S$.
Suppose that $\Gamma$ is a $G$-nilpotent series of $S$, and $x\in UF_{e_i,e_{i+1}}$ for some $0\leq i\leq n-1$. Then we may write $x=e_{i+1}g$ for some $g\in e_i\theta$ by Lemma \ref{fgeq}. Also, we have that $e_{i+1}s=e_{i+1}eh\in F_{0,e_{i+1}}$ for every $s=eh\in S$, where $e\in E$ and $h\in G$. Notice that $e_{i+1}\in Z(S)$ and $e_{i+1}g\in Z(F_{0,e_{i+1}})$, it follows that
$$xs=e_{i+1}geh=e_{i+1}e_{i+1}geh=e_{i+1}g\cdot e_{i+1}eh=e_{i+1}eh\cdot e_{i+1}g= ehe_{i+1}g=sx$$
and so $x\in Z(S)$. This implies that $UF_{e_i,e_{i+1}}\subseteq Z(S)$ for all $0\leq i\leq n-1$.

To show the converse half, suppose that $UF_{e_i,e_{i+1}}\subseteq Z(S)$ for all $0\leq i\leq n-1$,
and $s\in F_{e_i,e_{i+1}}$ for some $0\leq i\leq n-1$. Then we may write $s=fg$ by Lemma \ref{fgeq}, where $g\in e_i\theta$ and $f$ is an idempotent such that $e_i\leq f \leq e_{i+1}$.
Also, we have that $e_{i+1} xy=ye_{i+1} x=e_{i+1} yx$ for every $x\in e_i\theta$ and $y\in G$, since $UF_{e_i,e_{i+1}}\subseteq Z(S)$ and $e_{i+1}$ is central in $S$. It follows that
$e_{i+1}xyx^{-1}y^{-1}=e_{i+1}$, i.e., $xyx^{-1}y^{-1}\in e_{i+1}\theta$ and so
$$ (e_{i+1}\theta x)(e_{i+1}\theta y)=e_{i+1}\theta xy=e_{i+1}\theta yx=(e_{i+1}\theta y)(e_{i+1}\theta x).$$
This shows that $e_i\theta/e_{i+1}\theta\leq_g Z(G/e_{i+1}\theta)$. Notice that $e_{i+1}\theta\leq_g f\theta \leq_g e_i\theta$, we get that $f\theta\unlhd G$ and so $f$ is central in $S$ by Lemma \ref{thecon} (ii). Now, we have $s=fg=fe_{i+1}g\in Z(S)$, since $f, e_{i+1}g\in Z(S)$. This shows that $F_{e_i,e_{i+1}}\subseteq Z(S)$ for all $0\leq i\leq n-1$, i.e., $\Gamma$ is a $G$-nilpotent series of $S$.

Suppose that $UF_{e_i,e_{i+1}}$ is an abelian group for all $0\leq i\leq n-1$, and $s, \, t\in F_{e_i,e_{i+1}}$ for some $0\leq i\leq n-1$. Then we may write $s=fg$ and $t=eh$ by Lemma \ref{fgeq}, where $g, h\in e_i\theta$ and $e, f$ are idempotents such that $e_i\leq f,e \leq e_{i+1}$.
Since $e_{i+1}$ is a central idempotent and $UF_{e_i,e_{i+1}}$ is an abelian group, we have that $e_{i+1} xy=(e_{i+1}x)(e_{i+1}y)=(e_{i+1}y)(e_{i+1}x)=e_{i+1} yx$ for every $x,y\in e_i\theta$. It follows that
$e_{i+1}xyx^{-1}y^{-1}=e_{i+1}$, i.e., $xyx^{-1}y^{-1}\in e_{i+1}\theta$ and so
$$ (e_{i+1}\theta x)(e_{i+1}\theta y)=e_{i+1}\theta xy=e_{i+1}\theta yx=(e_{i+1}\theta y)(e_{i+1}\theta x).$$
This shows that $e_i\theta/e_{i+1}\theta$ is an abelian group. Notice that $e_{i+1}\theta\leq_g e\theta, f\theta\leq_g e_i\theta$, we get that $e\theta,f\theta \unlhd e_{i}\theta$ and so $g(e\theta)g^{-1}=e\theta$, $h^{-1}(f\theta)h=f\theta$. Thus we can see from Lemma \ref{thecon} that $geg^{-1}=e$ and $h^{-1}fh=f$, since $\theta$ is injective. Also, we have that $fe\leq e_{i+1}$, since $f,e\leq e_{i+1}$. It follows that $ghg^{-1}g^{-1}\in e_{i+1}\theta\leq_g (fe)\theta$ and so $fe ghg^{-1}h^{-1} =fe$, i.e., $fegh=fehg$. Now, we have that
$$ st=fgeh=fgeg^{-1}gh=fegh=fehg=efhg=ehh^{-1}fhg=ehfg=ts.$$
This shows that $F_{e_i,e_{i+1}}$ is commutative for all $0\leq i\leq n-1$, i.e. $\Gamma$ is a $G$-solvable series of $S$.
\end{proof}

As two consequences we have immediately

\begin{cor}\label{sei}
Let $S$ be a $G$-nilpotent factorizable inverse monoid and
$$0=e_0\leq e_1\leq\ldots\leq e_n=1$$
a $G$-nilpotent series of $S$. Then all the idempotents in the interval $[e_i,e_{i+1}]$ are central in $S$ for every $0\leq i\leq n-1$. Moreover, if $e\in  [e_i,e_{i+1}]$, then
$$0=e_0\leq e_1\leq\ldots\leq e_i\leq e\leq e_{i+1}\leq\cdots\leq e_n=1$$
is also  a $G$-nilpotent series of $S$.
\end{cor}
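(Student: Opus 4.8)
The plan is to deduce both assertions from Lemma~\ref{nseq} together with the explicit description of the factors given in Lemma~\ref{fgeq}(iii). First I would record the slightly sharper fact, already established inside the proof of Lemma~\ref{nseq}, that if $\Gamma\colon 0=e_0\leq e_1\leq\cdots\leq e_n=1$ is a $G$-nilpotent series then $F_{e_i,e_{i+1}}\subseteq Z(S)$ for every $0\leq i\leq n-1$ (not merely $F_{e_i,e_{i+1}}\subseteq Z(F_{0,e_{i+1}})$, which is the literal definition). Since by Lemma~\ref{fgeq}(iii) the inverse submonoid $F_{e_i,e_{i+1}}=e_{i+1}e_i^{\upharpoonright}$ has semilattice of idempotents precisely the interval $[e_i,e_{i+1}]$, every idempotent $e$ with $e_i\leq e\leq e_{i+1}$ satisfies $e=e_{i+1}e\in e_{i+1}e_i^{\upharpoonright}=F_{e_i,e_{i+1}}\subseteq Z(S)$, and is therefore central in $S$; this is the first claim.

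For the ``moreover'' part I would fix $e\in[e_i,e_{i+1}]$ and form the refined chain $0=e_0\leq\cdots\leq e_i\leq e\leq e_{i+1}\leq\cdots\leq e_n=1$. By the first part $e$ is central in $S$, hence this is again a chain of central idempotents, and the new factors $F_{e_i,e}$ and $F_{e,e_{i+1}}$ are defined ($e$ being central in $e_i^{\upharpoonright}$ and $e_{i+1}$ in $e^{\upharpoonright}$). By Lemma~\ref{nseq} it will be a $G$-nilpotent series as soon as the unit group of each of its factors lies in $Z(S)$. All factors except these two already occur among the factors of $\Gamma$, so only $F_{e_i,e}$ and $F_{e,e_{i+1}}$ require checking, and by Lemma~\ref{fgeq}(iii) their unit groups are $UF_{e_i,e}=e\cdot e_i\theta$ and $UF_{e,e_{i+1}}=e_{i+1}\cdot e\theta$, while $UF_{e_i,e_{i+1}}=e_{i+1}\cdot e_i\theta\subseteq Z(S)$.

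To finish I would argue as follows. For $g\in e_i\theta$ we have $eg=e\cdot(e_{i+1}g)$, a product of the central elements $e$ and $e_{i+1}g\in UF_{e_i,e_{i+1}}\subseteq Z(S)$, so $UF_{e_i,e}\subseteq Z(S)$; and since $\theta$ is anti-isotone and $e\geq e_i$ we get $e\theta\subseteq e_i\theta$, whence $UF_{e,e_{i+1}}=e_{i+1}\cdot e\theta\subseteq e_{i+1}\cdot e_i\theta=UF_{e_i,e_{i+1}}\subseteq Z(S)$. Thus every factor of the refined chain has its unit group inside $Z(S)$, and Lemma~\ref{nseq} applies. There is essentially no obstacle here, since the corollary really is immediate; the only points that need a moment's care are invoking the strengthened conclusion $F_{e_i,e_{i+1}}\subseteq Z(S)$ from the proof of Lemma~\ref{nseq} rather than from the bare definition of a $G$-nilpotent series, and using the anti-isotonicity of $\theta$ to see that inserting a central idempotent $e$ can only shrink the relevant unit groups.
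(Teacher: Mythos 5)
Your proof is correct and takes exactly the route the paper intends: the corollary is stated there as an immediate consequence of Lemma \ref{nseq}, and your argument simply fills in that derivation, using Lemma \ref{fgeq}(iii) for the unit groups and the (valid) observation that combining the two directions of Lemma \ref{nseq} gives $F_{e_i,e_{i+1}}\subseteq Z(S)$, so the idempotents of $[e_i,e_{i+1}]$ are central and the two new factors obtained by inserting $e$ have unit groups $e\cdot e_i\theta$ and $e_{i+1}\cdot e\theta$ inside $Z(S)$. No gaps to report.
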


\begin{cor}\label{nsl1eq}
Let $S$ be a factorizable inverse monoid such that the natural connection $\theta$ is a dual isomorphism from $E$ to a sublattice of $L(G)$. Then the followings are equivalent.
\begin{itemize}
\item[${ (i)}$] $S$ is a $G$-nilpotent inverse monoid of $G$-nilpotent length 1;

\item[${ (ii)}$] $S$ is a $G$-solvable inverse monoid of $G$-solvable length 1;

\item[${ (iii)}$] $S$ is commutative.
\end{itemize}
\end{cor}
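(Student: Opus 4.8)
The plan is to notice that a $G$-nilpotent (resp., $G$-solvable) series of length $1$ can only be the chain $\Gamma\colon 0=e_0\leq e_1=1$, and that its unique factor is all of $S$, so the defining condition on that factor collapses to ``$S$ is commutative''. Indeed, since $0$ lies beneath every element, the filter $0^{\upharpoonright}$ is all of $S$ and $1$ is its identity; hence by Lemma~\ref{fgeq}(iii), $F_{0,1}=1\cdot 0^{\upharpoonright}=[0,1]\cdot 0\theta=EG=S$, so that $Z(F_{0,1})=Z(S)$ and $UF_{0,1}=1\cdot 0\theta=G$.

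With this in hand the equivalences are a direct unravelling of the definitions. For (iii)$\Rightarrow$(i) and (iii)$\Rightarrow$(ii): if $S$ is commutative then $F_{0,1}=S=Z(S)=Z(F_{0,1})$ is a commutative semigroup contained in its own centre, so $\Gamma$ is at once a $G$-nilpotent and a $G$-solvable series; thus the $G$-nilpotent and $G$-solvable lengths of $S$ are at most $1$, and since $0\neq 1$ no series of length $0$ exists (it would force $e_0=0=1$), so both lengths are exactly $1$. For (i)$\Rightarrow$(iii) and (ii)$\Rightarrow$(iii): a $G$-nilpotent (resp., $G$-solvable) series of length $1$ is necessarily $\Gamma$, and its defining condition $F_{0,1}\subseteq Z(F_{0,1})$ (resp., $F_{0,1}$ commutative) reads, after substituting $F_{0,1}=S$ and $Z(F_{0,1})=Z(S)$, simply as $S=Z(S)$, i.e., $S$ is commutative. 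Combining the four implications yields (i)$\Leftrightarrow$(iii)$\Leftrightarrow$(ii).

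Alternatively one may quote Lemma~\ref{nseq} directly with $n=1$: it identifies ``$\Gamma$ is a $G$-nilpotent series'' with ``$UF_{0,1}=G\subseteq Z(S)$'' and ``$\Gamma$ is a $G$-solvable series'' with ``$UF_{0,1}=G$ is abelian'', after which one only needs that $G\subseteq Z(S)$ iff $G$ is abelian iff $S$ is commutative; the two formal implications are trivial and the remaining one, namely $G$ abelian $\Rightarrow$ $S$ commutative, is exactly the $i=0$, $e_{i+1}=1$ case already proved inside Lemma~\ref{nseq}. The only point needing a little care — and it is minor — is the length bookkeeping: one must check that ``length $1$'' really forces the series to be $0\leq 1$ and rule out a length-$0$ series, which is where the standing hypothesis $0\neq 1$ enters; beyond that the proof is purely definitional once $F_{0,1}=S$ has been observed.
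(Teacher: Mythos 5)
Your argument is correct. The paper itself gives no written proof: Corollary~\ref{nsl1eq} is stated as an immediate consequence of Lemma~\ref{nseq} (``As two consequences we have immediately''), which is exactly your alternative route, so that part matches the paper's intent. Your primary argument is slightly different and in fact more elementary: by observing $0^{\upharpoonright}=S$ and hence $F_{0,1}=S$, $Z(F_{0,1})=Z(S)$, you reduce everything to the definition of a $G$-nilpotent (resp.\ $G$-solvable) series of length $1$, and this unwinding does not even use the hypothesis that $\theta$ is a dual isomorphism (that hypothesis only enters the paper's framework for defining the series and in Lemma~\ref{nseq}); the only points of care, which you correctly note, are that the unique series of length $1$ is $0\leq 1$ (with $0$ and $1$ trivially central, so it is a legitimate chain of central idempotents) and that no length-$0$ series exists because $0\neq 1$. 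So both of your routes are sound; the direct one buys a self-contained definitional proof, while quoting Lemma~\ref{nseq} with $n=1$ is the paper's implicit path and delegates the step ``$G\subseteq Z(S)$, or $G$ abelian, implies $S$ commutative'' to the computation already carried out there.
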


In the following, we shall show that each subgroup of a $G$-nilpotent (resp., $G$-solvable) inverse monoid $S$ of $G$-nilpotent length $n$ (resp., $G$-solvable length $n$) is a nilpotent group of class at most $n$ (resp., a solvable group of derived length at most $n$). For this, we need the following two lemmas.

\begin{lem}\label{gnilu}
Let $S$ be a $G$-nilpotent (resp., $G$-solvable) inverse monoid  with the group $G$ of units of $G$-nilpotent length $n$ (resp., $G$-solvable length $n$). Then $G$ is a nilpotent group (resp., solvable group) of class at most $n$ (resp., of derived length at most $n$).
\end{lem}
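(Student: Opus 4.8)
The plan is to transport a $G$-nilpotent (resp. $G$-solvable) series of $S$ through the natural connection $\theta$ to a central (resp. abelian) series of $G$ of the same length, and then to quote the standard group-theoretic fact (Robinson~\cite{robinson}) that a group possessing a central series of length $n$ is nilpotent of class at most $n$, while a group possessing an abelian series of length $n$ is solvable of derived length at most $n$.

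Concretely, I would start from a $G$-nilpotent series $\Gamma:\ 0=e_0\leq e_1\leq\cdots\leq e_n=1$ of central idempotents of $S$. Since $\theta$ is anti-isotone and $0\theta=0^{\upharpoonright}\cap G=G$, $1\theta=1^{\upharpoonright}\cap G=\{1\}$, applying $\theta$ yields a descending chain $G=e_0\theta\supseteq e_1\theta\supseteq\cdots\supseteq e_n\theta=\{1\}$ of subgroups of $G$; because each $e_i$ is central in $S$, Lemma~\ref{thecon}(ii) gives $e_i\theta\unlhd G$, so this is a normal series. To identify its factors, recall that by Lemma~\ref{nseq} the hypothesis that $\Gamma$ is $G$-nilpotent is equivalent to $UF_{e_i,e_{i+1}}\subseteq Z(S)$ for all $i$, and by Lemma~\ref{fgeq}(iii) $UF_{e_i,e_{i+1}}=e_{i+1}\cdot e_i\theta$; combining these with the centrality of $e_{i+1}$ — exactly the computation inside the proof of Lemma~\ref{nseq} — one gets, for every $g\in e_i\theta$ and $h\in G$, that $e_{i+1}gh=e_{i+1}hg$, hence $e_{i+1}(ghg^{-1}h^{-1})=e_{i+1}$, i.e. $ghg^{-1}h^{-1}\in e_{i+1}\theta$. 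Thus $[e_i\theta,G]\subseteq e_{i+1}\theta$, that is, $e_i\theta/e_{i+1}\theta\leq_g Z(G/e_{i+1}\theta)$.

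Reindexing by $G_j:=e_{n-j}\theta$ then turns the descending chain into an ascending normal series $\{1\}=G_0\unlhd G_1\unlhd\cdots\unlhd G_n=G$ whose factor $G_{j+1}/G_j=e_{n-j-1}\theta/e_{n-j}\theta$ lies in $Z(G/G_j)$; so it is a central series of length $n$ and $G$ is nilpotent of class at most $n$. The $G$-solvable case runs identically: Lemma~\ref{nseq} now gives that each $UF_{e_i,e_{i+1}}$ is abelian, which by the same computation forces $[e_i\theta,e_i\theta]\subseteq e_{i+1}\theta$, so the reindexed chain is an abelian series of $G$ of length $n$ and $G$ is solvable of derived length at most $n$.

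I do not anticipate a genuine obstacle here, since the substantive content is already packaged in Lemma~\ref{nseq}; what remains is bookkeeping — verifying $0\theta=G$ and $1\theta=\{1\}$, that $\theta$ reverses order so the chain must be reindexed from the top, that normality of each term is Lemma~\ref{thecon}(ii), and that in the nilpotent case the commutator inclusion genuinely uses centrality in all of $S$ (equivalently commutation with every $h\in G$), not just with the units lying over $e_i$. The only line worth spelling out is the identity $ghg^{-1}h^{-1}\in e_{i+1}\theta$, which either is quoted from the proof of Lemma~\ref{nseq} or is re-derived in one step: for $g\in e_i\theta$, $h\in G$, the element $e_{i+1}g$ is central and $e_{i+1}$ is a central idempotent, so $e_{i+1}gh=e_{i+1}hg$ and multiplying on the right by $g^{-1}h^{-1}$ gives $e_{i+1}(ghg^{-1}h^{-1})=e_{i+1}$.
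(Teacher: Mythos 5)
Your proposal is correct and follows essentially the same route as the paper: apply $\theta$ to the $G$-nilpotent (resp.\ $G$-solvable) series to get a normal series of $G$, then use Lemma~\ref{nseq} (i.e.\ $UF_{e_i,e_{i+1}}\subseteq Z(S)$, resp.\ abelian) together with centrality of $e_{i+1}$ to derive $e_{i+1}ghg^{-1}h^{-1}=e_{i+1}$, hence $ghg^{-1}h^{-1}\in e_{i+1}\theta$, so the factors $e_i\theta/e_{i+1}\theta$ are central (resp.\ abelian). Your extra bookkeeping (checking $0\theta=G$, $1\theta=\{1\}$, normality via Lemma~\ref{thecon}(ii), and the reindexing) is fine and matches the paper's argument in substance.
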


\begin{proof}
Suppose that $S$ is $G$-nilpotent. Then there exist central idempotents $e_0,e_1,\ldots,e_n$ in $S$ such that $0=e_0\leq e_1\leq\cdots\leq e_n=1$ is a $G$-nilpotent series of $S$ and so $\Gamma:$
$$ 1=e_n\theta\leq_g e_{n-1}\theta\leq_g\cdots\leq_g e_0\theta=G $$
is a normal series of $G$. It follows that $e_{i+1}xg=ge_{i+1}x=e_{i+1}gx$ for every $x\in e_i\theta$ and $g\in G$, since we have $e_{i+1}x,e_{i+1}\in Z(S)$ by Lemma \ref{nseq}. This implies that $e_{i+1}xgx^{-1}g^{-1}=e_{i+1}$ and so $xgx^{-1}g^{-1}\geq e_{i+1}$, i.e., $xgx^{-1}g^{-1}\in e_{i+1}\theta$.  Thus we have that
$(e_{i+1}\theta x)(e_{i+1}\theta g)=e_{i+1}\theta xg=e_{i+1}\theta gx=(e_{i+1}\theta g)(e_{i+1}\theta x)$
for every $x\in e_i\theta$ and $g\in G$ and so $e_i\theta/e_{i+1}\theta\leq_g Z(G/e_{i+1}\theta)$. This shows that $\Gamma$ is a central idempotent series of $G$ and so
$G$ is a nilpotent group and its nilpotent class is at most $n$, as required.

Suppose that $S$ is $G$-solvable. Then there exist central idempotents $e_0,e_1,\ldots,e_n$ in $S$ such that $0=e_0\leq e_1\leq\cdots\leq e_n=1$ is a $G$-solvable series of $S$ and so $\Gamma:$
$$ 1=e_n\theta\leq_g e_{n-1}\theta\leq_g\cdots\leq_g e_0\theta=G $$
is a normal series of $G$. It follows that
$e_{i+1}xy=(e_{i+1}x)(e_{i+1}y)=(e_{i+1}y)(e_{i+1}x)=e_{i+1}yx$
for every $x,y\in e_i\theta$, since $e_{i+1}\in Z(S)$ and $UF_{e_i,e_{i+1}}$ is abelian. This implies that $e_{i+1}xgx^{-1}g^{-1}=e_{i+1}$ and so $xgx^{-1}g^{-1}\geq e_{i+1}$, i.e., $xgx^{-1}g^{-1}\in e_{i+1}\theta$. Thus we have that
$(e_{i+1}\theta x)(e_{i+1}\theta x)=e_{i+1}\theta xy=e_{i+1}\theta yx=(e_{i+1}\theta y)(e_{i+1}\theta x)$
for every $x,y\in e_i\theta$ and so $e_i\theta/e_{i+1}\theta$ is abelian. This shows that $\Gamma$ is an abelian series of $G$ and so
$G$ is a solvable group and its derived length is at most $n$, as required.
\end{proof}

\begin{lem}\label{fasec}
Let $S$ be a factorizable inverse monoid with group of units $G$ and $e$ an idempotent of $S$. Then $N_e =\{g\in G \mid g^{-1}eg=e\}$ is a subgroup of $G$ and 
the ${\cal H}$-class $H_e$ containing $e$ is isomorphic to factor group $N_e/(e\theta\cap N_e)$.
\end{lem}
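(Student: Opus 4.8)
The plan is to exhibit $H_e$ as a quotient of $N_e$ modulo $e\theta$ via the map $g\mapsto eg$, after first disposing of the routine claim that $N_e$ is a subgroup. First I would verify that $N_e$ is a subgroup of $G$: it is the stabiliser of $e$ under the conjugation action of $G$ on $E$, so $1\in N_e$, and for $g,h\in N_e$ we have $(gh)^{-1}e(gh)=h^{-1}(g^{-1}eg)h=h^{-1}eh=e$ and $(g^{-1})^{-1}eg^{-1}=g(g^{-1}eg)g^{-1}=e$. I would also record two elementary facts about units that will be used repeatedly: for $g\in G$ and $e\in E$, taking inverses in $eg=e$ gives $g^{-1}e=e$, hence $ge=e$, so $eg=e\iff ge=e$; this shows $e\theta=e^\upharpoonright\cap G\subseteq N_e$, so that in fact $e\theta\cap N_e=e\theta$. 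Secondly, every $g\in N_e$ commutes with $e$, since $g^{-1}eg=e$ yields $eg=ge$.

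Next I would define $\varphi\colon N_e\to S$ by $g\varphi=eg$ and check that ${\rm im}\,\varphi\subseteq H_e$ and that $\varphi$ is a homomorphism. For $g\in N_e$, using $eg=ge$ we get $(eg)(eg)^{-1}=egg^{-1}e=e$ and $(eg)^{-1}(eg)=g^{-1}e\cdot eg=g^{-1}eg=e$, so $eg\ {\cal H}\ e$; and $(g\varphi)(h\varphi)=eg\cdot eh=e(ge)h=e(eg)h=e(gh)=(gh)\varphi$, where $gh\in N_e$ by the first paragraph. Regarding $\varphi$ as a homomorphism into the group $H_e$, its kernel is $\{g\in N_e\mid eg=e\}=e\theta$.

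The one step that genuinely uses the hypothesis that $S$ is factorizable is surjectivity of $\varphi$, and I would treat it as the crux. Given $a\in H_e$, factorizability gives $a\leq g$ for some $g\in G$, hence $a=(aa^{-1})g=eg$ since $aa^{-1}=e$ for any $a$ in the ${\cal H}$-class of $e$; then $e=a^{-1}a=g^{-1}e\cdot eg=g^{-1}eg$, so $g\in N_e$ and $g\varphi=a$. Thus $\varphi$ is a surjective homomorphism from $N_e$ onto the group $H_e$ with kernel $e\theta$, and the first isomorphism theorem yields $H_e\cong N_e/e\theta=N_e/(e\theta\cap N_e)$. No serious obstacle arises; the only care needed is with the natural partial order identities for units and with the observation that an element of $H_e$ in a factorizable monoid is a unit multiplied on the left by $aa^{-1}$.
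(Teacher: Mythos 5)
Your proposal is correct and follows essentially the same route as the paper: both exhibit the surjective homomorphism $N_e\to H_e$ given by multiplication by $e$, identify its kernel with $e\theta\cap N_e$ via $ge=e\iff g\geq e$, and invoke the first isomorphism theorem, with factorizability used exactly where you use it, namely to write an element of $H_e$ as an idempotent times a unit. The only (harmless) difference is that you additionally observe $e\theta\subseteq N_e$, so the quotient can be written as $N_e/e\theta$.
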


\begin{proof}
For every $g_1,g_2\in N_e$, we have that $e=g_1^{-1}eg_1=g_2^{-1}eg_2$. It follows that $g_1eg_1^{-1}=e$ and $g_2^{-1}g_1^{-1}eg_1g_2=g_2^{-1}eg_2=e$ and so $g_1^{-1}, g_1g_2\in N_e$. This shows that $N_e$ is a subgroup of $G$.

It is easy to see that $ge\in H_e$ for every $g\in Ne$, since $(ge)^{-1}(ge)=eg^{-1}ge=e$ and $(ge)(ge)^{-1}=geeg^{-1}=e$ for every $g\in N_e$. Also, if $x=gf\in H_e$, where $g\in G$ and $f\in E$, then $e=e^{-1}e=x^{-1}x=fg^{-1}gf=f$ and $e=ee^{-1}=xx^{-1}=gffg^{-1}=gfg^{-1}=geg^{-1}$. It follows that $x=ge$ and $g^{-1}eg=e$, i.e., $g\in N_e$.
We can therefore define a surjective map $\varphi: N_e\rightarrow H_e$ by $g\varphi=ge$. Notice that $e$ is the identity of $H_e$, we have that
$$(gh)\varphi=ghe=g(he)e=(ge)(he)=(g\varphi)(h\varphi)$$
for every $g,h\in N_e$.
This shows that $\varphi$ is a homomorphism. Now, we only need to show that $\ker\varphi=e\theta\cap N_e$. In fact, for each $x\in N_e$, we have that
$$x\in \ker\varphi \Longleftrightarrow x\varphi=e \Longleftrightarrow xe=e \Longleftrightarrow x\geq e\Longleftrightarrow x\in e\theta.$$
This implies that $\ker\varphi=e\theta\cap N_e$, as required.
\end{proof}

By Lemma \ref{gnilu} and Lemma \ref{fasec} we have immediately

\begin{prop}
Let $S$ be a $G$-nilpotent (resp., $G$-solvable) factorizable inverse monoid of $G$-nilpotent length (resp., $G$-solvable length) $n$. Then each subgroup of $S$ is nilpotent and of class at most $n$ (resp., solvable and of derived length at most $n$).
\end{prop}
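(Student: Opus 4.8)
The plan is to reduce the statement to the structure of the maximal subgroups $H_e$ of $S$ and then combine the two preceding lemmas with the elementary closure properties of nilpotent and solvable groups under subgroups and homomorphic images (see Robinson \cite{robinson}).

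First I would observe that any subgroup $H$ of $S$ (that is, any subsemigroup of $S$ which happens to be a group) is contained in a single ${\cal H}$-class. Indeed, if $e$ is the identity element of $H$, then $e$ is an idempotent of $S$, and for each $h\in H$ the group inverse $h'$ of $h$ satisfies $hh'=h'h=e$, so $h'$ is an inverse of $h$ in the inverse-semigroup sense; by uniqueness of inverses $h'=h^{-1}$, whence $hh^{-1}=h^{-1}h=e$ and $h\in H_e$. Thus $H\le_g H_e$. Since the nilpotency class of a group does not increase on passing to subgroups (and likewise the derived length), it suffices to prove that each maximal subgroup $H_e$ is nilpotent of class at most $n$ (resp. solvable of derived length at most $n$).

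Now Lemma \ref{gnilu} tells us that the group of units $G$ is nilpotent of class at most $n$ (resp. solvable of derived length at most $n$), and Lemma \ref{fasec} tells us that $N_e=\{g\in G\mid g^{-1}eg=e\}$ is a subgroup of $G$ with $H_e\cong N_e/(e\theta\cap N_e)$. Being a subgroup of $G$, the group $N_e$ is nilpotent of class at most $n$ (resp. solvable of derived length at most $n$); and being a homomorphic image of $N_e$, the quotient $N_e/(e\theta\cap N_e)$, hence $H_e$, has the same property. Combining this with $H\le_g H_e$ from the previous step yields the assertion for $H$, and the solvable case is word-for-word the same with ``derived length'' replacing ``class''.

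I do not anticipate a genuine obstacle: the only points that need to be spelled out are the inclusion of an arbitrary subgroup $H$ of $S$ into the containing ${\cal H}$-class $H_e$, and the (standard) fact that nilpotency class and derived length are non-increasing under subgroups and quotients; everything else is a direct citation of Lemma \ref{gnilu} and Lemma \ref{fasec}.
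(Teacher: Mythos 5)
Your proof is correct and follows essentially the same route as the paper, which derives the proposition immediately from Lemma \ref{gnilu} and Lemma \ref{fasec}. The only additions you make—embedding an arbitrary subgroup into the maximal subgroup $H_e$ at its identity idempotent and invoking the closure of nilpotency class and derived length under subgroups and quotients—are exactly the routine details the paper leaves implicit.
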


The following will give a characterization of $G$-nilpotent (resp., $G$-solvable) inverse monoids.

\begin{thm}\label{niliff}
Let $S$ be a factorizable inverse monoid with semilattice $E$ of idempotents and group $G$ of units such that the natural connection $\theta$ is a dual isomorphism from $E$ to a sublattice of $L(G)$. Then $S$ is $G$-nilpotent (resp., $G$-solvable) if and only if $G$ is nilpotent (resp., solvable) and $E\theta$ contains a central series (resp., an abelian series) of $G$.
\end{thm}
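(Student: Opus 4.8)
The plan is to prove the $G$-nilpotent statement in full; the $G$-solvable statement then follows by the same argument with ``contained in $Z(S)$'' replaced by ``abelian'' and ``central series'' by ``abelian series'' throughout, using the solvable halves of Lemmas~\ref{gnilu} and~\ref{nseq}. In both cases the whole point is to transport (sub)normal series back and forth along the dual isomorphism $\theta$ and to match up the centrality conditions on the semigroup side with those on the group side, the main tools being Lemmas~\ref{fgeq}, \ref{thecon}, \ref{nseq} and~\ref{gnilu}.

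For \emph{necessity}, suppose $S$ is $G$-nilpotent and fix a $G$-nilpotent series $0=e_0\leq e_1\leq\cdots\leq e_n=1$. Lemma~\ref{gnilu} already gives that $G$ is nilpotent, so it only remains to produce a central series of $G$ inside $E\theta$. Apply $\theta$ to the series: since every $e_i$ is central in $S$, Lemma~\ref{thecon}(ii) gives $e_i\theta\unlhd G$, so $1=e_n\theta\unlhd e_{n-1}\theta\unlhd\cdots\unlhd e_0\theta=G$ is a normal series of $G$ all of whose terms lie in $E\theta$. That it is in fact central, i.e.\ $e_i\theta/e_{i+1}\theta\leq_g Z(G/e_{i+1}\theta)$, is exactly the computation performed inside the proof of Lemma~\ref{gnilu}: by Lemma~\ref{nseq} one has $UF_{e_i,e_{i+1}}\subseteq Z(S)$, so each $e_{i+1}x$ with $x\in e_i\theta$ commutes with every $g\in G$, forcing $[x,g]\in e_{i+1}\theta$.

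For \emph{sufficiency}, assume $G$ is nilpotent and that $1=H_0\unlhd H_1\unlhd\cdots\unlhd H_n=G$ is a central series of $G$ with $H_j\in E\theta$ for all $j$. Since $\theta$ is a dual isomorphism of $E$ onto $E\theta$, there are unique idempotents $e_i$ with $e_i\theta=H_{n-i}$; $\theta$ being order-reversing turns $H_0\leq_g\cdots\leq_g H_n$ into $e_0\leq e_1\leq\cdots\leq e_n$, and $H_n=G=0\theta$ together with $H_0=\{1\}=1\theta$ force $e_0=0$, $e_n=1$. Each $e_i\theta=H_{n-i}$ is normal in $G$, so $e_i$ is central in $S$ by Lemma~\ref{thecon}(ii); thus $\Gamma\colon 0=e_0\leq e_1\leq\cdots\leq e_n=1$ is a chain of central idempotents, and the hypothesis reads precisely $e_i\theta/e_{i+1}\theta\leq_g Z(G/e_{i+1}\theta)$ for each $i$. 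I would then verify $UF_{e_i,e_{i+1}}\subseteq Z(S)$ for every $i$; by Lemma~\ref{nseq} this makes $\Gamma$ a $G$-nilpotent series of $S$, so $S$ is $G$-nilpotent.

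The one non-routine point is this last inclusion, which amounts to running the computation in the converse part of the proof of Lemma~\ref{nseq} backwards. By Lemma~\ref{fgeq}(iii), $UF_{e_i,e_{i+1}}=e_{i+1}\cdot e_i\theta$, so fix $g\in e_i\theta$ and an arbitrary $s=fh\in S$ (with $f\in E$, $h\in G$, using factorizability). Two facts suffice: first $e_{i+1}gh=e_{i+1}hg$, because the coset $e_{i+1}\theta\,g$ lies in $Z(G/e_{i+1}\theta)$ and hence $[g,h]\in e_{i+1}\theta$; second the identity $e_{i+1}\wedge gfg^{-1}=e_{i+1}\wedge f$, obtained by applying $\theta$ and using Lemma~\ref{thecon}(i), the normality of $e_{i+1}\theta$, and the fact that a central element of $G/e_{i+1}\theta$ conjugates every subgroup of $G$ containing $e_{i+1}\theta$ onto itself, so that $(e_{i+1}\wedge gfg^{-1})\theta=e_{i+1}\theta\cdot(gfg^{-1})\theta=e_{i+1}\theta\cdot f\theta=(e_{i+1}\wedge f)\theta$, whence the claim by injectivity of $\theta$. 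Combining these with the centrality of $e_{i+1}$ and $gf=(gfg^{-1})g$ gives $(e_{i+1}g)(fh)=(fh)(e_{i+1}g)$, i.e.\ $e_{i+1}g\in Z(S)$. I expect this to be the main obstacle; in the solvable case it is much easier, since $g\mapsto e_{i+1}g$ identifies $UF_{e_i,e_{i+1}}$ with $e_i\theta/e_{i+1}\theta=H_{n-i}/H_{n-i-1}$, which is abelian by hypothesis, so Lemma~\ref{nseq} applies immediately.
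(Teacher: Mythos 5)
Your proposal is correct and follows essentially the same route as the paper: necessity via Lemma~\ref{gnilu} (whose proof already produces the central/abelian series $1=e_n\theta\leq_g\cdots\leq_g e_0\theta=G$ inside $E\theta$), and sufficiency by pulling the series back through $\theta$, getting central idempotents from Lemma~\ref{thecon}(ii), verifying $UF_{e_i,e_{i+1}}\subseteq Z(S)$ (resp.\ abelian) and invoking Lemma~\ref{nseq}. Your derivation of the key identity $e_{i+1}(gfg^{-1})=e_{i+1}f$ via the subgroup correspondence in $G/e_{i+1}\theta$ is just an equivalent reformulation of the paper's element-wise computation showing $g$ commutes with $fe_{i+1}$, so the arguments coincide in substance.
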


\begin{proof}
We can see from Lemma \ref{gnilu} that $G$ is nilpotent (resp., solvable) and $E\theta$ contains a central series (resp., an abelian series) of $G$ if $S$ is $G$-nilpotent (resp., $G$-solvable).

Suppose that $G$ is nilpotent and $E\theta$ contains a central series
$$1=e_n\theta\leq_g e_{n-1}\theta\leq_g\cdots\leq_g e_0\theta=G$$
of $G$. We can see from Lemma \ref{thecon} (ii) that $e_i$ is a central idempotent for every $0\leq i\leq n$. Also, it is clear that $0=e_0\leq e_1\leq\cdots\leq e_n=1$. In the following we shall prove that
$UF_{e_i,e_{i+1}} \subseteq Z(S)$ for every $0\leq i\leq n$ and so the above central idempotents series of $S$ is a $G$-nilpotent series.
In fact, for $a\in UF_{e_i,e_{i+1}} $ and $s=eh\in S$, where $e\in E$ and $h\in G$, we may write $a=e_{i+1}g$ by Lemma \ref{fgeq}, where $g\in e_i\theta$. Notice that $e_i\theta/e_{i+1}\theta\leq_g Z(G/e_{i+1}\theta)$, we have that
$$e_{i+1}\theta(xy)=(e_{i+1}\theta x)(e_{i+1}\theta y)=(e_{i+1}\theta y)(e_{i+1}\theta x)=e_{i+1}\theta(yx)$$
for every $x\in G$ and $y\in e_i\theta$. In particular, we have that $e_{i+1}\theta(hg)=e_{i+1}\theta(gh)$ and so $hgh^{-1}g^{-1}\in e_{i+1}\theta$. This implies that $e_{i+1} hgh^{-1}g^{-1} =e_{i+1}$, i.e.,
$$e_{i+1}hg=e_{i+1}gh.$$
Also, we have that $x^{-1}g^{-1}xg\in e_{i+1}\theta$
and so $g^{-1}xg=x(x^{-1}g^{-1}xg)\in (e\theta)(e_{i+1}\theta)=(ee_{i+1})\theta$ for every $x\in e\theta$. This implies that $g^{-1}(e\theta)g\leq_g (ee_{i+1})\theta$ and so
\begin{align*}
g^{-1}(ee_{i+1}\theta)g&=g^{-1}(e\theta)(e_{i+1}\theta)g=g^{-1}(e\theta)gg^{-1}(e_{i+1}\theta)g  \\
&=g^{-1}(e\theta)g(e_{i+1}\theta) \leq_g (ee_{i+1})\theta(e_{i+1}\theta)=(ee_{i+1})\theta,
\end{align*}
since $e_{i+1}\theta\unlhd G$.
Similarly we may get that $g(ee_{i+1}\theta)g^{-1}\leq_g (ee_{i+1})\theta$
and so $g^{-1}(ee_{i+1}\theta)g= (ee_{i+1})\theta$. Notice that $\theta$ is injective, it follows from Lemma \ref{thecon} that $g^{-1}(ee_{i+1})g=ee_{i+1}$, i.e.,
$$gee_{i+1}=ee_{i+1}g.$$
Now, we have that
$$as=e_{i+1}geh=gee_{i+1}h=ee_{i+1}gh =ee_{i+1}hg=ehe_{i+1}g=sa. $$
This shows that $UF_{e_i,e_{i+1}} \subseteq Z(S)$, as required.

Suppose that $G$ is solvable and $E\theta$ contains an abelian series
$$1=e_n\theta\leq_g e_{n-1}\theta\leq_g\cdots\leq_g e_0\theta=G$$
of $G$. Similarly, $0=e_0\leq e_1\leq\cdots\leq e_n=1$ is a central idempotent series of $S$ and we only need to prove that $UF_{e_i,e_{i+1}}$ is an abelian group for every $0\leq i\leq n$.
For $s,a\in UF_{e_i,e_{i+1}}$, we may write $s=e_{i+1}h$ and $a=e_{i+1}g$ by Lemma \ref{fgeq}, where $h,g\geq e_i$. Notice that $e_i\theta/e_{i+1}\theta$ is an abelian group, we have that $e_{i+1}\theta(gh)=(e_{i+1}\theta g)(e_{i+1}\theta h)=(e_{i+1}\theta h)(e_{i+1}\theta g)=e_{i+1}\theta (hg)$ and so $ghg^{-1}h^{-1}\in e_{i+1}\theta$. This implies that $e_{i+1}ghg^{-1}h^{-1}=e_{i+1}$, i.e., $e_{i+1}gh=e_{i+1}hg$. Now, we have that
$$ as=e_{i+1}ge_{i+1}h=e_{i+1}gh=e_{i+1}hg=e_{i+1}he_{i+1}g=sa.$$
This shows that $UF_{e_i,e_{i+1}}$ is abelian, as required.
\end{proof}

As a consequence, noticing  that in a coset monoid ${\cal K}(G)$ of a group $G$, the natural connection $\theta$ is a dual isomorphism from $E({\cal K}(G))$ to $L(G)$ (see \cite{mcali1}), we have immediately

\begin{cor}\label{snchr}
A group is a nilpotent (resp., solvable) group of class (derived length) $n$ if and only if its coset semigroup is a $G$-nilpotent (resp., $G$-solvable) monoid of $G$-nilpotent length (resp., $G$-solvable length) $n$.
\end{cor}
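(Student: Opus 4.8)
The plan is to reduce everything to Theorem~\ref{niliff} and Lemma~\ref{gnilu}. The one extra fact needed is the observation, due to \cite{mcali1} and recalled above, that for the coset monoid ${\cal K}(G)$ the natural connection $\theta$ is a dual isomorphism of $E({\cal K}(G))$ \emph{onto} the whole subgroup lattice $L(G)$, so that $E\theta = L(G)$. Hence the side condition appearing in Theorem~\ref{niliff} --- that $E\theta$ contain a central (resp.\ abelian) series of $G$ --- becomes vacuous as soon as $G$ is known to be nilpotent (resp.\ solvable): any such series of $G$ consists of subgroups of $G$ and therefore lies in $L(G) = E\theta$. Feeding this into Theorem~\ref{niliff} already yields the qualitative half of the statement: ${\cal K}(G)$ is $G$-nilpotent (resp.\ $G$-solvable) if and only if $G$ is nilpotent (resp.\ solvable).

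It remains to match the numerical invariants, which I would do by pinching between two inequalities. On one side, if ${\cal K}(G)$ is $G$-nilpotent of $G$-nilpotent length $n$ (resp.\ $G$-solvable of $G$-solvable length $n$), then Lemma~\ref{gnilu} gives that its group of units $G$ is nilpotent of class $\leq n$ (resp.\ solvable of derived length $\leq n$). On the other side I would run the construction inside the proof of Theorem~\ref{niliff} in reverse: writing $c$ for the nilpotency class of $G$, choose a central series $1 = Z_c \unlhd Z_{c-1} \unlhd \cdots \unlhd Z_0 = G$ of $G$ of that minimal length; since $\theta$ is a bijection onto $L(G)$ there are unique idempotents $e_i \in E({\cal K}(G))$ with $e_i\theta = Z_i$, and since $\theta$ is order-reversing with $0\theta = G$ and $1\theta = \{1\}$ one gets $0 = e_0 \leq e_1 \leq \cdots \leq e_c = 1$; the computation carried out in the proof of Theorem~\ref{niliff} then shows that this chain is a $G$-nilpotent series of ${\cal K}(G)$, whence its $G$-nilpotent length is $\leq c$. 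The two inequalities together force the $G$-nilpotent length of ${\cal K}(G)$ to equal the nilpotency class of $G$. The $G$-solvable case is proved in exactly the same way, replacing ``central series'' by ``abelian series'' and ``nilpotency class'' by ``derived length'', and using the classical fact that the derived series $1 = G^{(d)} \unlhd \cdots \unlhd G^{(1)} \unlhd G^{(0)} = G$ is a normal series of $G$ (each $G^{(i)}$ being characteristic in $G$) and is a shortest normal series of $G$ with abelian factors.

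I do not expect a genuine obstacle here: all the semigroup-theoretic content is already packaged in Theorem~\ref{niliff} and Lemma~\ref{gnilu}, and the residual group theory --- existence and extremality of the upper central and derived series --- is classical. The only point requiring care is the bookkeeping of lengths: one must orient the inequality coming from Lemma~\ref{gnilu} and the inequality coming from the explicit construction so that together they force equality, both for ``$G$-nilpotent length versus class'' and for ``$G$-solvable length versus derived length''.
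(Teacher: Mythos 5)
Your proposal is correct and follows essentially the same route as the paper: the paper derives this corollary immediately from Theorem~\ref{niliff} (with Lemma~\ref{gnilu} supplying one direction of the length comparison), using precisely the observation that for ${\cal K}(G)$ the natural connection is a dual isomorphism onto all of $L(G)$, so the condition on $E\theta$ is automatic. Your explicit pinching of the two length inequalities (class $\leq$ $G$-nilpotent length from Lemma~\ref{gnilu}, and $G$-nilpotent length $\leq$ class from pulling back a shortest central, resp.\ abelian, series through $\theta^{-1}$ and invoking the construction in the proof of Theorem~\ref{niliff}) is exactly the bookkeeping the paper leaves implicit.
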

The above  provides a characterization of the coset monoids of nilpotent (resp., solvable) groups.

Recall that in \cite{zhao}, an element $a$ in an inverse semigroup $S$ is called \emph{pre-idempotent} if $a^2\leq a$ under the natural order on $S$.
An ${\cal R}$-class $R_e$ containing an idempotent $e$ of an inverse semigroup is said to be \emph{anti-abnormal} if $R_e$ contains exactly one pre-idempotent. An idempotent $e$ in $S$ is \emph{anti-abnormal} if so does $R_e$. In fact, we can show that subcentral idempotentd in an inverse semigroup are anti-abnormal.

\begin{lem}\label{aabtrans}
Let $e,\, f$ and $g$ be idempotents in an inverse semigroup $S$ satisfying that $e\geq f\geq g$. If $e$ is anti-abnormal in $f^\upharpoonright$ and $f$ is anti-abnormal in $g^\upharpoonright$, then $e$ is anti-abnormal in $g^\upharpoonright$.
\end{lem}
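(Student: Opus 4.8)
The plan is to check directly that the ${\cal R}$-class of $e$ in $g^\upharpoonright$ contains no pre-idempotent besides $e$. So suppose $a$ is a pre-idempotent of $g^\upharpoonright$ lying in that ${\cal R}$-class. Since on any inverse semigroup ${\cal R}$ is detected by $xx^{-1}$, and since inversion and the natural order restrict to inverse subsemigroups, this says exactly that $a\in S$ with $a\geq g$, $aa^{-1}=e$ and $a^2\leq a$; the goal is to deduce $a=e$. Note $f^\upharpoonright\subseteq g^\upharpoonright$ and $e,f\in g^\upharpoonright$ because $e\geq f\geq g$, so all three anti-abnormality statements make sense.

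Step 1: the arithmetic of $a$. Put $f_0=a^{-1}a$. Feeding $a^2\leq a$ into the standard equivalences $u\leq v\iff u=uu^{-1}v\iff u=vu^{-1}u$ and simplifying (all idempotents commute), one gets $a^2=ae=f_0a$; right-multiplying $a^2=f_0a$ by $a^{-1}$ then gives $ae=f_0e$. Hence $ae=f_0e=ef_0$ is an idempotent below $e$, and since $f\leq e$ forces $f=ef$, we also get $af=(ae)f=(f_0e)f=f_0f=ff_0$, and $f_0a=a^2=ae=f_0e$.

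Step 2: descent to $f^\upharpoonright$ via the element $fa$. From $g\leq f$ and $g\leq a$ one checks $(fa)g=f(ag)=fg=g$ and $g(fa)=(gf)a=ga=g$, so $fa\geq g$, i.e. $fa\in g^\upharpoonright$; and $f\leq e=aa^{-1}$ gives $(fa)(fa)^{-1}=f(aa^{-1})f=fef=f$, so $fa$ lies in the ${\cal R}$-class of $f$ inside $g^\upharpoonright$. The crux is that $fa$ is a pre-idempotent: using $af=ff_0$ and $f_0a=f_0e$ from Step 1, $(fa)^2=f(af)a=(ff_0)a=f(f_0a)=f(f_0e)=f_0f$, and likewise $faf=f(af)=ff_0=f_0f$, so $(fa)^2=faf=(fa)f\leq fa$. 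Since $f$ is anti-abnormal in $g^\upharpoonright$, and $f$ is of course a pre-idempotent of its own ${\cal R}$-class, this forces $fa=f$.

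Step 3: conclusion, and the obstacle. From $fa=f$ and $f^{-1}=f$ we get $a^{-1}f=f$, hence $af=aa^{-1}f=ef=f$; so $f=fa=af$, i.e. $f\leq a$ and $a\in f^\upharpoonright$. Then $a$ is a pre-idempotent of $f^\upharpoonright$ lying in the ${\cal R}$-class of $e$ (as $aa^{-1}=e$ and $e\in f^\upharpoonright$), so anti-abnormality of $e$ in $f^\upharpoonright$ yields $a=e$, as required. The heart of the argument is Step 2: one must hit on the correct truncation of $a$ into $f^\upharpoonright$ — the one-sided product $fa$, not $af$ or the conjugate $afa^{-1}$, which do not behave as well — and verify at once that it lies above $g$, is ${\cal R}$-equivalent to $f$, and is a pre-idempotent; this last verification, resting on the identities $a^2=ae=f_0a$, is the only place the hypothesis $a^2\leq a$ is used. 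Once $fa=f$ is established, the two anti-abnormality hypotheses apply mechanically.
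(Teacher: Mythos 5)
Your proof is correct and takes essentially the same route as the paper: both arguments hinge on the truncated element $fa$, showing it is a pre-idempotent in the ${\cal R}$-class of $f$ inside $g^\upharpoonright$, concluding $fa=f$ from anti-abnormality of $f$, hence $a\in f^\upharpoonright$, and then applying anti-abnormality of $e$ in $f^\upharpoonright$ to get $a=e$. The only difference is cosmetic: the paper checks $(fa)^2\leq fa$ directly via order-compatibility ($fafa\leq faea=fa^2\leq fa$), whereas you derive exact identities such as $a^2=ae=a^{-1}a\cdot a$ to reach the same conclusion.
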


\begin{proof}
Suppose that $a\in R_e^{g^\upharpoonright}$ is pre-idempotent. Then we have that $aa^{-1}=e$ and $a^2\leq a$. Let $s=fa$. It is easy to see that $s\in g^\upharpoonright$ and
$$ s^2=fafa\leq faea=faaa^{-1}a=fa^2\leq fa=s,$$
$$ ss^{-1}=faa^{-1}f^{-1}=fef=f.$$
It follows that $s=fa$ is pre-idempotent and so
$fa=f$, since $f$ is anti-abnormal in $g^\upharpoonright$. This implies that $a\geq f$ and so $a\in f^\upharpoonright$. Notice that $e$ is anti-abnormal in $f^\upharpoonright$, we get that $a=e$. This shows that $e$ is anti-abnormal in $g^\upharpoonright$, as required.
\end{proof}

\begin{lem}\label{subcanti}
Every subcentral idempotent in an inverse semigroup is anti-abnormal.
\end{lem}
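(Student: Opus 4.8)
The plan is to peel the subcentral idempotent series off one step at a time, using the transitivity already recorded in Lemma~\ref{aabtrans}. So let $e$ be a subcentral idempotent, fix a subcentral idempotent series $0=e_0\le e_1\le\cdots\le e_n=e$ (thus $e_{i+1}$ is central in $e_i^\upharpoonright$ for each $i$), and the goal is to show that $e$ is anti-abnormal in $S$, i.e.\ in $e_0^\upharpoonright=S$ since $e_0=0$. I would split this into two parts: first a one-step statement, namely that if $f$ is central in $e^\upharpoonright$ (with $e\le f$) then $f$ is anti-abnormal in $e^\upharpoonright$; and then a descending induction that stacks these one-step statements via Lemma~\ref{aabtrans}.

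For the one-step statement I would argue directly. Let $f$ be central in $e^\upharpoonright$ and let $a\in R_f^{e^\upharpoonright}$ be pre-idempotent, so $a\in e^\upharpoonright$, $aa^{-1}=f$ and $a^2\le a$. The key computation is that the idempotent $(a^2)(a^2)^{-1}=a(aa^{-1})a^{-1}=afa^{-1}$ collapses back to $f$: since $f$ is central in $e^\upharpoonright$ and $a,a^{-1}\in e^\upharpoonright$, we get $afa^{-1}=aa^{-1}f=ff=f$. Hence $a^2$ and $a$ have the same left idempotent $aa^{-1}=f$, so $a^2$ and $a$ are ${\cal R}$-related in $e^\upharpoonright$; combining this with $a^2\le a$ and the standard description of the natural order ($x\le y$ iff $x=xx^{-1}y$) forces $a^2=(a^2)(a^2)^{-1}a=aa^{-1}a=a$. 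Thus $a$ is idempotent and $a=aa^{-1}=f$, so $f$ is the only pre-idempotent in $R_f^{e^\upharpoonright}$, i.e.\ $f$ is anti-abnormal in $e^\upharpoonright$.

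To assemble these, apply the one-step statement to each pair $e_i\le e_{i+1}$ to get that $e_{i+1}$ is anti-abnormal in $e_i^\upharpoonright$, and then prove by downward induction on $j=n-1,n-2,\dots,0$ that $e_n$ is anti-abnormal in $e_j^\upharpoonright$: the case $j=n-1$ is exactly the previous sentence, and if $e_n$ is anti-abnormal in $e_{j+1}^\upharpoonright$ then, since also $e_{j+1}$ is anti-abnormal in $e_j^\upharpoonright$ and $e_n\ge e_{j+1}\ge e_j$, Lemma~\ref{aabtrans} yields that $e_n$ is anti-abnormal in $e_j^\upharpoonright$. Taking $j=0$ gives $e=e_n$ anti-abnormal in $e_0^\upharpoonright=S$, which is the assertion.

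I do not anticipate a genuine obstacle: the transitivity is already isolated as Lemma~\ref{aabtrans}, and the real content sits entirely in the one-step computation, whose only subtle point is recognizing that centrality of $f=aa^{-1}$ is precisely what makes $(a^2)(a^2)^{-1}$ fall back to $f$, so that the natural-order inequality $a^2\le a$ upgrades to equality. The only bookkeeping is to note that each $e_i^\upharpoonright$ is an inverse subsemigroup of $S$ on which inverse, natural order and ${\cal R}$ are the restrictions of those on $S$ (immediate, since each $e_i$ is idempotent), so the notions "pre-idempotent" and "$R_f^{e_i^\upharpoonright}$" used above are unambiguous.
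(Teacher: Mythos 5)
Your proposal is correct and follows essentially the same route as the paper: prove the one-step statement that a central idempotent is anti-abnormal by a short computation exploiting centrality of $aa^{-1}$ (you compute $(a^2)(a^2)^{-1}=afa^{-1}=f$ and invoke $x\le y\iff x=xx^{-1}y$, while the paper shows $a^{-1}a\le a$ and deduces $a\le a^2$; both force $a^2=a$ and hence $a=f$), and then chain these steps along the subcentral series via Lemma~\ref{aabtrans}. Your explicit downward induction merely spells out what the paper leaves implicit in the phrase ``the conclusion follows from Lemma~\ref{aabtrans} immediately.''
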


\begin{proof}
Let $e$ be an central idempotent in an inverse semigroup $S$.
Suppose that $a\in R_e$ is pre-idempotent. Then we have
$aa^{-1}=e$ and $a^2\leq a$. It follows that
$$ a^{-1}a=a^{-1}aa^{-1}a=a^{-1}ea=a^{-1}ae=a^{-1}a^2a^{-1}\leq a^{-1}aa^{-1}\leq a^{-1}$$
and so $a^{-1}a=(a^{-1}a)^{-1}\leq(a^{-1})^{-1}=a$. This implies that $a=aa^{-1}a\leq a^2\leq a$ and so $a^2=a$. Noticing that $a\in R_e$, we get that $a=e$. This shows that $e$ is anti-abnormal. Now, the conclusion follows from Lemma \ref{aabtrans} immediately.
\end{proof}

The following gives some further properties of $G$-nilpotent inverse monoids.

\begin{prop}\label{nilsubc}
Let $S$ be a factorizable inverse monoid with semilattice $E$ of idempotents and group $G$ of units such that the natural connection $\theta$ is a dual isomorphism from $E$ to a sublattice of $L(G)$. If $S$ is $G$-nilpotent, then
\begin{itemize}
\item[${(i)}$] every idempotent $f\in E$ is subcentral in $S$;

\item[${(ii)}$]  there exists an idempotent $e<f$ such that $f$ is central in $e^\upharpoonright$ for every idempotent $f\in E\setminus\{0\}$;

\item[${(iii)}$]  every 0-minimal idempotent is central in $S$;

\item[${(iv)}$]  $S$ is $E$-reflexive.
\end{itemize}
\end{prop}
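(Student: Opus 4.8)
The plan is to prove the four assertions in sequence, exploiting the $G$-nilpotent series $0=e_0\leq e_1\leq\cdots\leq e_n=1$ of $S$ together with Corollary \ref{sei}, which tells us that every idempotent in each interval $[e_i,e_{i+1}]$ is central in $S$ and that inserting such an idempotent leaves a $G$-nilpotent series.

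For (i): given $f\in E$, I would build a subcentral idempotent series between $0$ and $f$ by meeting the given series with $f$. Set $f_i=e_i\wedge f=e_if$; then $0=f_0\leq f_1\leq\cdots\leq f_n=f$. To see each $f_{i+1}$ is central in $f_i^\upharpoonright$, note $f_{i+1}=e_{i+1}f$ and apply Lemma \ref{thecon}(v): since $e_{i+1}$ is central in $e_i^\upharpoonright$ (indeed central in $S$) and $f$ is central in $S$, $e_{i+1}f$ is central in $(e_if)^\upharpoonright=f_i^\upharpoonright$. Hence $f$ is subcentral in $S$.

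For (ii): take $f\in E\setminus\{0\}$ and the subcentral series $0=f_0\leq f_1\leq\cdots\leq f_n=f$ just constructed. Let $e=f_{n-1}$; if $e<f$ we are done since $f=f_n$ is central in $f_{n-1}^\upharpoonright=e^\upharpoonright$. If $e=f$, pass down the chain to the largest index $j$ with $f_j<f$ (such $j$ exists because $f_0=0<f$), and take $e=f_j$; then $f=f_{j+1}$ is central in $f_j^\upharpoonright$ by the same argument. For (iii): if $f$ is $0$-minimal, apply (ii) to get $e<f$ with $f$ central in $e^\upharpoonright$; $0$-minimality forces $e=0$, so $f$ is central in $0^\upharpoonright=S$. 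For (iv): recall $S$ is $E$-reflexive means $aeb\in E$ implies $bea\in E$ for all $a,b\in S$ and $e\in E$; equivalently, for inverse semigroups, every idempotent-separating-relevant product behaves symmetrically. The cleanest route is: by (i) every idempotent is subcentral, hence anti-abnormal by Lemma \ref{subcanti}, and one shows that an inverse semigroup in which every idempotent is anti-abnormal (equivalently, $E$ is ``well-positioned'') is $E$-reflexive; alternatively, argue directly that if $s=fg\in S$ and $e\in E$ then $ses^{-1}=f(geg^{-1})f\in E$ and use that each $geg^{-1}$ can be compared with $e$ via the central series to transfer $seb\in E$ to $bes\in E$.

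The main obstacle I anticipate is part (iv): deducing $E$-reflexivity is not an immediate consequence of the central series the way (i)--(iii) are, and I would need to either recall/reprove the equivalence between ``every idempotent anti-abnormal'' and ``$E$-reflexive'' (this seems to be the intended use of Lemmas \ref{aabtrans} and \ref{subcanti}, which are otherwise unmotivated) or carry out the direct conjugation argument showing $aeb\in E\Rightarrow bea\in E$ using that, modulo each $e_{i+1}\theta$, the group $G$ acts centrally on the relevant cosets. Parts (i)--(iii) are short and follow the pattern of Corollary \ref{subcs} and Lemma \ref{thecon}(v); the bookkeeping there is routine.
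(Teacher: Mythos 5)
Your reduction of (ii) and (iii) to (i), and your identification of the route to (iv) via Lemma \ref{subcanti} (the paper indeed dispatches (iv) by citing Lemma \ref{subcanti} together with an external lemma of \cite{zhao}), are in line with the paper. The chain you build for (i), $f_i=e_if$, is also exactly the paper's chain. But your verification that $f_{i+1}$ is central in $f_i^\upharpoonright$ is where the argument breaks: you invoke Lemma \ref{thecon}(v) with the roles ``$e_1=e_{i+1}$, $e_2=e_i$, $f=f$'', and that lemma requires the idempotent $f$ to be \emph{central in $S$}. Here $f$ is an arbitrary idempotent, so the hypothesis fails and the lemma does not apply; swapping roles does not help either, since the only idempotent known to be central in its own filter in the needed position is $f$ itself, which yields a vacuous conclusion. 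More tellingly, your argument uses only the centrality of the $e_i$ and never the $G$-nilpotency of the series, and that cannot suffice: translated to a coset monoid $\mathcal{K}(G)$, statement (i) says every subgroup of $G$ is subnormal, while ``$f_{i+1}$ central in $f_i^\upharpoonright$'' says $(e_{i+1}\theta)(f\theta)\unlhd (e_i\theta)(f\theta)$; with $e_i\theta\unlhd G$ alone this is false in general (take $e_i\theta=G$, $e_{i+1}\theta=1$ and $f\theta$ a non-normal subgroup), and it is precisely the central-series property $e_i\theta/e_{i+1}\theta\leq_g Z(G/e_{i+1}\theta)$ that rescues it.

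The paper closes this gap by a direct computation rather than by Lemma \ref{thecon}(v): for $x=hg\in f_i^\upharpoonright$ one has $g\in(e_if)\theta=(e_i\theta)\vee(f\theta)=(e_i\theta)(f\theta)$ (using $e_i\theta\unlhd G$ and the dual isomorphism), so $g=g_1g_2$ with $g_1\in e_i\theta$, $g_2\in f\theta$; then $g_2f=fg_2=f$, and — this is where $G$-nilpotency enters, via Lemma \ref{nseq} — $e_{i+1}g_1\in UF_{e_i,e_{i+1}}\subseteq Z(S)$, which lets one push $f_{i+1}=e_{i+1}f$ past $hg_1g_2$ and conclude $xf_{i+1}=f_{i+1}x$. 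So your proof needs this step (or an equivalent use of $UF_{e_i,e_{i+1}}\subseteq Z(S)$) inserted in place of the appeal to Lemma \ref{thecon}(v); with that repair, parts (ii) and (iii) go through as you describe, and (iv) is correct modulo the cited result of \cite{zhao} relating anti-abnormality of idempotents to $E$-reflexivity, which is also all the paper does.
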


\begin{proof}
We only need to prove (i), since (ii) and (iii) follows from (i) immediately and (iv) is a immediate consequence of Lemma \ref{subcanti} and \cite[Lemma 3.2]{zhao}.
Suppose that $0=e_0\leq e_1\leq\cdots\leq e_n=1$ is a $G$-nilpotent series of $S$. Let $f_i=e_if$ for each $0\leq i\leq n$. For $x=hg\in f_i^\upharpoonright$, where $h\in E$ and $g\in G$, we can see from Lemma \ref{fgeq} that $g\geq f_i$. Notice that $\theta$ is a dual isomorphism from $E$ to a sublattice of $L(G)$ and $e_i$ is central in $S$, we have that
$$g\in f_i^\upharpoonright \cap G=(e_if)^\upharpoonright \cap G=(e_if)\theta=e_i\theta\vee f\theta=( e_i\theta)(f\theta).$$
This implies that there exist $g_1\in e_i\theta$ and $ g_2\in f\theta$ such that $g=g_1g_2$. It follows that $e_{i+1}g_1\in Z(S)$ and $g_2f=fg_2=f$ and so
\begin{align*}
xf_{i+1}&=hge_{i+1}f= he_{i+1}gf=he_{i+1}g_1g_2f=he_{i+1}g_1f \\
&=hfe_{i+1}g_1=hfg_2e_{i+1}g_1=hfe_{i+1}g_1g_2=e_{i+1}fhg_1g_2 \\
&=f_{i+1}x
\end{align*}
This shows that $f_{i+1}$ is central in $f_i^\upharpoonright$ and so
$$ 0=f_0\leq f_1\leq\cdots\leq f_n=f$$
is a subcentral idempotent series between 0 and $f$. Thus $f$ is subcentral in $S$, as required.
\end{proof}

\section{Some problems}

We know that the following problems in group theory (see \cite{khukhro}) are still open:

\begin{itemize}
\item[$*$] {\bf Problem 1}(\cite[Problem 15.40]{khukhro}) Let $N$ be a nilpotent subgroup of a finite simple group $G$. Is it true that there exists a subgroup $N_1$ conjugate to $N$ such that $N \cap N_1 = 1$?
\item[$*$] {\bf Problem 2}(\cite[Problem 16.97]{khukhro}) Let $G$ be a torsion-free group with all subgroups subnormal of defect at most $n$. Must $G$ then be nilpotent of class at most $n$?
\item[$*$] {\bf Problem 3}(\cite[Problem 17.41]{khukhro}) Let $S$ be a solvable subgroup of a finite group $G$ that has no nontrivial solvable normal subgroups.

a) (L. Babai, A. J. Goodman, L. Pyber). Do there always exist seven conjugates of $S$ whose intersection is trivial?

b) Do there always exist five conjugates of $S$ whose intersection is trivial?
\item[$*$] {\bf Problem 4}(\cite[Problem 17.91]{khukhro}) Let $d(X)$ denote the derived length of a group $X$.

a) Does there exist an absolute constant $k$ such that $d(G)-d(M) \leq k$ for every finite soluble group $G$ and any maximal subgroup $M$?

b) Find the minimum $k$ with this property.
\item[$*$] {\bf Problem 5}(\cite[Problem 18.61]{khukhro}) Is a 2-group nilpotent if all its finite subgroups are nilpotent of class at most 3?
\item[$*$] {\bf Problem 6}(\cite[Problem 20.122]{khukhro}) Let $G$ be a finite group and $F(G)$ denotes the largest nilpotent normal subgroup of $G$. For nilpotent subgroups $A, B, C$ of $G$, let ${\rm Min}_G(A, B, C)$ be the subgroup of $A$ generated by all minimal by inclusion intersections of the form $A\cap B^x\cap C^y$, where $x, y \in G$, and let ${\rm min}_G(A, B, C)$ be the subgroup of ${\rm Min}_G(A, B, C)$ generated by all intersections of this kind of minimal order.

a) Is it true that ${\rm min}_G(A, B, C) \leq F(G)$?

b) Is it true that ${\rm Min}_G(A, B, C) \leq F(G)$?

c) The same questions for soluble groups.
\end{itemize}

In the following we shall give a translation of the above problems to semigroup theory, which may be useful for us to solve these open problems.

It was shown in \cite{mcali1} that an inverse semigroup $S$ is isomorphic to the coset monoid ${\cal K}(G)$ for some group $G$ if and only if $S=EH$ is factorizable and the natural connection $\theta$ of the semilattice $E$ idempotents of $S$ into the lattice $L(H)$ of subgroups of $H$ is a dual isomorphism, where $E$ is the semilattice of idempotents of $S$ and $H$ is the group of units of $S$. In this case, $H\cong G$. Further, the subgroups (resp., normal subgroups) of group $G$ are precisely the idempotents (resp., central idempotents) of the coset monoid ${\cal K}(G)$ of $G$ and two subgroups of $G$ are conjugate if and only if they are ${\cal D}$-equivalent in ${\cal K}(G)$.
Also,  1 (the identity subgroup of $G$) and $G$ are the identity and zero element of ${\cal K}(G)$, respectively. Hence, a group is simple if and only if its coset monoid has no central idempotents except zero and identity.
For a subgroup $H$ of $G$, we can see from \cite{zhao} that the coset monoid ${\cal K}(H)$ of $H$ is equal to the filter $H^\upharpoonright$ generated by $H$ in ${\cal K}(G)$.
This implies that ${\cal K}(H\cap K)={\cal K}(H)\cap {\cal K}(K)=H^\upharpoonright\cap K^\upharpoonright$ for every subgroups $H$ and $K$ of $G$.
Notice that the proof of \cite[Proposition 3.3]{zhao} tells us that in a factorizable inverse monoid, two idempotents $e$ and $f$ are ${\cal D}$-equivalent if and only if there exists a unit element $g$ such that $f=g^{-1}eg$. In this case, it is easy to verify that $f^\upharpoonright=g^{-1}(e^\upharpoonright)g$.

In the remainder of this section, we always assume that $S$ is a factorizable inverse monoid with semilattice $E$ of idempotents and group $G$ of units, and the natural connection $\theta$ is a dual isomorphism from $E$ to the lattice $L(G)$ of subgroups $G$, i.e., $S$ is isomorphic to  the coset monoid ${\cal K}(G)$ of group $G$.
Now, it is easy to see that Problem 1 is equivalent to the following problem in semigroup theory.

\begin{itemize}
\item[$*$] {\bf Problem $\bf 1^\prime$} If $S$ is finite and $E_c=\{0,1\}$, does there exist $g\in G$ such that $e^\upharpoonright \cap g^{-1}(e^\upharpoonright)g = \{1\}$ for any given $e\in E$ with $G$-nilpotent $e^\upharpoonright$?
\end{itemize}

It is easy to see that a subgroup of a group $G$ is a subnormal subgroup of defect $n$ if and only if it is a subcentral idempotent in ${\cal K}(G)$ of defect $n$. We can see from Corollary \ref{snchr} that $G$ is a nilpotent (resp., solvable) group of class (resp., derived length) $n$ if and only if its coset monoid ${\cal K}(G)$ is a $G$-nilpotent (resp., $G$-solvable) monoid of $G$-nilpotent length (resp., $G$-solvable length) $n$.
Thus, Problem 2 and Problem 3 are equivalent to the following problems in semigroup theory respectively.

\begin{itemize}
\item[$*$] {\bf Problem $\bf 2^\prime$} If $G$ is torsion-free and every idempotent in $S$ is subcentral of defect at most $n$, must $S$ then be $G$-nilpotent of $G$-nilpotent length at most $n$?
\item[$*$] {\bf Problem $\bf 3^\prime$} If $S$ is finite and $f^\upharpoonright$ is not $G$-solvable for every central idempotent $f\in E_c\setminus\{1\}$, does there always exist seven (or five) units in $G$, say $g_1,g_2,\ldots,g_7$ (or $g_1,g_2,\ldots,g_5$), such that $\bigcap\{g_i^{-1}(e^\upharpoonright)g_i \mid 1\leq i\leq 7\}=\{1\}$ (or $\bigcap\{g_i^{-1}(e^\upharpoonright)g_i \mid 1\leq i\leq 5\}=\{1\}$) for any given $e\in E$ with $G$-solvable $e^\upharpoonright$?
\end{itemize}

Notice that the maximal subgroups of a group $G$ are precisely the primitive idempotents of ${\cal K}(G)$. Let $d_s(X)$ denote the $G$-solvable length of a $G$-solvable inverse semigroup $X$. Then Problem 4 and  Problem 5 are translated as follows:
\begin{itemize}
\item[$*$] {\bf Problem $\bf 4^\prime$}
a) Does there exist an absolute constant $k$ such that $d_s(S)-d_s(e^\upharpoonright) \leq k$ for every finite $G$-solvable inverse monoid $S$ and any primitive idempotent $e$ in $S$?

b) Find the minimum $k$ with this property.
\item[$*$] {\bf Problem $\bf 5^\prime$} If $G$ is a 2-group and $e^\upharpoonright$ is $G$-nilpotent of $G$-nilpotent length at most 3 for every $e$ in $E$ with $e^\upharpoonright$ finite, must $S$ then be $G$-nilpotent?
\end{itemize}

Notice that the natural partial order on the coset monoid ${\cal K}(G)$ of a group $G$ is the inverse of inclusion, the intersection $H\cap K$ of two subgroups $H$ and $K$ in a group $G$ is precisely the join $H\vee K$ of $H$ and $K$ in ${\cal K}(G)$. Also, a subgroup $H$ of $G$ is contained in $F(G)$ if and only if there exists a nilpotent normal subgroup $K$ such that $H\leq K$, since $F(G)$ is the largest nilpotent normal subgroup of $G$. Thus, Problem 6 is equivalent to the following problem in semigroup theory.

\begin{itemize}
\item[$*$] {\bf Problem $\bf 6^\prime$} Let $S$ be finite, $e,f,h\in E$ such that $e^\upharpoonright.f^\upharpoonright,h^\upharpoonright$ are $G$-nilpotent (resp., $G$-solvable),
and let $M=\{e\vee f'\vee h' \mid f',h'\in E, f'{\cal D} f, h'{\cal D} h\}$,
${\rm Max}_S(e, f, h)$ be the product of all maximal elements (under natural order) in $M$,
$m=min\{|x\theta| \mid x\in M\}$ and ${\rm max}_S(e, f, h)$ be the product all elements $x$ in $M$ with $|x\theta|=m$.

a) Does there exist a central idempotent $c$ in $S$ such that $c^\upharpoonright$ is $G$-nilpotent (resp., $G$-solvable) and $c\leq {\rm max}_S(e, f, h)$?

b) Does there exist a central idempotent $c$ in $S$ such that $c^\upharpoonright$ is $G$-nilpotent (resp., $G$-solvable) and $c\leq {\rm Max}_S(e, f, h)$?
\end{itemize}

\end{document}